\newtheorem{lemma}{Lemma}[section]
\newtheorem{theorem}[lemma]{Theorem}
\newtheorem{conjecture}[lemma]{Conjecture}
\newcounter{claim}
\newenvironment{proof}[1][]%
 {\noindent {\setcounter{claim}{0}\sc proof ---
   }{#1}{}}{\hfill$\Box$\vspace{2ex}} 
\newenvironment{claim}[1][]%
{\refstepcounter{claim}\vspace{1ex}\noindent{(\it\arabic{claim}){#1}{}}\it}{\vspace{1ex}}
\newenvironment{proofclaim}[1][]%
	{\noindent {}{#1}{}}{ This proves~(\arabic{claim}).\vspace{1ex}}
\tikzstyle{vertex}=[circle, draw, inner sep=0pt, minimum size=6pt]
\newcommand{\vertex}{\node[vertex]}
\newcommand{\sm}{\setminus} 
\newcommand{\ov}{\overline}
\newcommand{\mc}{\mathcal}
\newenvironment{myp}[1]{{\noindent \it \bf \sf Proof\/ #1:}}{
\hfill {{\vrule height5pt width5pt depth0pt}\hskip 0cm \bigskip}
}
\newcommand{\ol}[1]{\overline{#1}}
\title{A new class of graphs that satisfies the Chen-Chv\'atal Conjecture \footnote{Partially supported by Basal program PBF 03 and Núcleo Milenio Información y Coordinación en Redes ICM/FIC P10-024F.   }}
\author{P. Aboulker$^a$,  M. Matamala $^{b,c}$, P. Rochet$^{c,d}$ and J. Zamora$^e$ \\
\small ($a$) Project Coati, I3S (CNRS, UNSA) and INRIA, Sophia Antipolis, France \\
\small ($b$) Depto. Ingeniería Matemática (DIM), Universidad de Chile \\
\small ($c$) Centro de Modelamiento Matemático (CMM, UMI 2807 CNRS), Universidad de Chile\\
\small ($d$) Laboratoire de Mathématiques Jean Leray, Université de Nantes \\
\small ($e$) Depto. Matemáticas, Universidad Andres Bello \\}
\begin{document}

\maketitle

\begin{abstract}
A well-known combinatorial theorem says that a set of $n$ non-collinear
points in the plane determines at least $n$ distinct lines.
Chen and Chvátal conjectured that this theorem extends  to 
metric spaces, with an appropriated definition of line.
In this work we prove a slightly stronger version of Chen and Chvátal conjecture
for a family of graphs containing chordal graphs and distance-hereditary graphs.
\end{abstract}

\section{Introduction}
A classic result in Euclidean geometry asserts  that every non-collinear set of $n$ points in the Euclidean plane determines at least $n$ distinct lines.

Erd\H{os} \cite{E43} showed that this result is a consequence of the Sylvester-Gallai theorem which asserts that every non-collinear set of $n$ points in the plane determines a line containing precisely two points. 
Coxeter \cite{Cox} showed that the Sylvester-Gallai theorem holds in a more basic setting known as  \textit{ordered geometry}. Here, the notions of distance and angle are not used and, instead, a ternary relation of \textit{betweenness} is employed.
We write $[abc]$ for the statement that $b$ lies between $a$ and $c$.  
In this notation, a \emph{line} $\ov{xy}$ is defined (for any two distinct points $x$ and $y$) as:

\begin{equation} \label{definitionofline}
\ov{xy}=\{x,y\} \cup \{ u:[uxy]\text{ or } [xuy] \text{ or } [xyu]\}
\end{equation}

Betweenness in metric spaces was first studied by Menger \cite{menger} and further on  by Chv\'atal~\cite{ChvatalMetric}. 
In a metric space $(V,d)$, we define
\[
[abc]  \Leftrightarrow d(a,b) + d(b,c) = d(a,c).
\]

Hence, in any metric space $(V,d)$, we can define the line $\ov{uv}$ induced by two points $u$ and $v$ as in (\ref{definitionofline}). A line of a metric space $(V,d)$ is \textit{universal} if it contains all points of $V$. With this definition of lines in metric spaces, Chen and Chv\'{a}tal \cite{CC} proposed the following beautiful conjecture.

\begin{conjecture} \label{conjC}
 Every metric space on $n$ points, where $n\ge 2$, either has at least $n$ distinct lines or has a universal line.
\end{conjecture}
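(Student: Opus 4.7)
The plan is to attack the conjecture by induction on $n$. The base case $n=2$ is immediate: two distinct points determine a single line which contains both of them and is therefore universal. For the inductive step, assume the statement holds for every metric space on fewer than $n$ points, and consider $(V,d)$ on $n\ge 3$ points. Writing $\mc L(V)$ for the set of lines of $(V,d)$, the goal is to show that either $|\mc L(V)|\ge n$ or some line of $V$ is universal.

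The natural strategy is to delete a well-chosen point $v\in V$ and apply the induction hypothesis to $V'=V\sm\{v\}$. If $V'$ contains a universal line $\ov{ab}$, then computing $\ov{ab}$ back in $V$ either already captures $v$ (in which case $\ov{ab}$ is universal in $V$ and we are done) or fails to capture $v$, and one tries to exhibit a second line through $v$ together with $n-1$ further lines inherited from $V'$. If instead $V'$ has at least $n-1$ distinct lines, the task reduces to producing one additional line of $V$ not already counted among these. The key sub-steps would be: (i) a lifting lemma comparing a line $\ov{xy}$ of $V'$ with its counterpart in $V$, controlling how $v$ can enlarge it; (ii) a pigeonhole argument ensuring that if many lines of $V'$ collapse when passing to $V$, then the betweenness relations forced on $v$ must create enough new lines through $v$ to compensate; and (iii) a careful choice of $v$, perhaps an extremal point for some metric invariant (e.g. maximizing eccentricity or lying on few betweenness triples), so that the collapse in step~(ii) is minimized.

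The main obstacle, and the reason this conjecture remains open in full generality, is the lack of monotonicity of the line operator under point deletion: lines of $V$ need not restrict to lines of $V'$, two distinct lines of $V'$ can become identified in $V$ (and vice versa), and a line of $V$ can strictly contain another line. Consequently an inductive bookkeeping argument has no hope of working without a finer invariant, for instance a weighted count of incidence pairs $(w,\ov{xy})$ or a classification of how a single point can ``witness'' the fusion of several lines. A promising refinement would be to prove a stronger statement by induction --- such as a lower bound on $|\mc L(V)|$ in terms of both $n$ and the number of pairs $\{x,y\}$ whose line is not universal --- so that the inductive hypothesis delivers enough slack to absorb the fusions created by reintroducing $v$. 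Designing such an invariant, and verifying that it behaves well under betweenness, is where I expect the decisive difficulty to lie; it is precisely the step that collapses in the present paper under the additional structural hypotheses satisfied by chordal and distance-hereditary graphs.
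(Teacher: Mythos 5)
The statement you set out to prove is Conjecture~\ref{conjC} itself, which is an \emph{open problem}: the paper contains no proof of it, and your proposal does not constitute one either. What you have written is a strategy outline whose decisive steps are left unproven, and your own final paragraph concedes this. Concretely, step~(ii) --- the pigeonhole claim that whenever many lines of $V\sm\{v\}$ fuse upon reintroducing $v$, enough new lines through $v$ appear to compensate --- is precisely the point where every known attempt at a general induction fails, and you give no argument for it; nor do you specify the extremal choice of $v$ in step~(iii) or prove the lifting lemma of step~(i). Since the best known lower bound for metric spaces with no universal line is $\Omega(\sqrt n)$ lines (cited in the paper's introduction), any complete proof along your lines would be a major breakthrough, and a gap at step~(ii) is not a technicality but the entire content of the conjecture. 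Your diagnosis of the obstacle --- non-monotonicity of the line operator under point deletion, with distinct lines collapsing and lines failing to restrict --- is accurate and well put, but a correct diagnosis of why induction fails is not a proof that it succeeds.

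It is worth comparing your closing suggestion with what the paper actually does. The paper never attacks the general conjecture: it proves the strictly weaker claim that Conjecture~\ref{conjC} holds for the hereditary class $\mc C$ (every induced subgraph is chordal, has a cut-vertex, or has a non-trivial module), and it does so via exactly the device you gesture at --- a \emph{stronger} inductive invariant, namely $\ell(G)+br(G)\geq |G|$ for connected $G\in\mc C\sm\mc F$, where bridges are counted with multiplicity so that the induction survives contracting a bridge (Part~1) and gluing at a cut-vertex (Part~2). But this strengthening only closes the books because the structural hypotheses supply hard control over line coincidences: Lemma~\ref{lem:chordalchvatal} for chordal graphs (equal lines through a simplicial vertex force a cut-vertex), and Lemma~\ref{lem:diamtwo} for the diameter-$2$ case arising from dominating modules (equal lines force false twins or unequal distances). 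In a general metric space no analogue of these lemmas is available, which is why your invariant-design step --- the one you correctly flag as the locus of difficulty --- remains open.
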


The best known lower bound for the number of lines in metric spaces with  no universal line is $\Omega(\sqrt n)$~\cite{metricSpace}.

As it is explained in~\cite{AK}, it suffices to prove Conjecture~\ref{conjC} for metric spaces with integral distances. 
This  motivates looking at two particular types of metric spaces. 
First, for a positive integer $k$, we define a \textit{k-metric space} to be a metric space in which all distances are integral and are at most $k$.
Chv\'{a}tal \cite{Chvatal2} proved that every $2$-metric space on $n$ points $(n \ge 2)$ either has at least $n$ distinct lines or has a universal line. The question is open for $k \ge 3$. Aboulker et al.~\cite{metricSpace} proved that, for all $k \ge 3$, a $k$-metric space with no universal line has at least $n/5k$ distinct lines. 

A second type of metric space with integral distances arises from graphs.
Any finite connected graph induces a metric space on its vertex set, where the distance between two vertices $u$ and $v$ is defined as the length of a shortest path linking $u$ and $v$. Such  metric spaces are called  \emph{graph metrics} and are the subject of this paper. 
The best known lower bound on the number of lines in a  graph metric with no universal line is  $\Omega(n^{4/7})$~\cite{metricSpace}.  
In~\cite{BBCCCCFZ} and~\cite{AK} it is proved that Conjecture~\ref{conjC} holds for chordal graphs and for distance-hereditary graphs respectively. 
The main result of this paper is to prove Conjecture~\ref{conjC} for all  graphs that can be constructed from chordal graphs by repeated substitutions and gluing along vertices. This  generalizes chordal and distance hereditary graphs.  

\section{Statement of the main theorem}

Let $G=(V,E)$ be a connected  graph.
Let $a,b,c$ be three distinct vertices in $V(G)$. 
The distance $d_G(a,b)$ (or simply $d(a,b)$ when the context is clear) between $a$ and $b$ is the length of a shortest path linking $a$ and $b$.
We write $[abc]^G$ (or simply $[abc]$) when $d(a,b)+d(b,c)=d(a,c) < \infty$.
Observe that $[abc] \Leftrightarrow [cba]$.  
We denote by $\ov{ab}^G$ (or simply $\ov{ab}$) the line induced by two distinct vertices $a,b$.
Recall that $\ov{ab}^G=\{a,b\} \cup \{x: [abx]\text{ or } [axb] \text{ or } [xab]\}$.
Notice that with this definition the line defined by two 
vertices $a,b$ lying in different connected components is $\{a,b\}$.
 We denote by ${\cal L}(G)$ the set of distinct lines in $G$ 
and by ${\ell }(G)=|{\cal L}(G)|$ the number of distinct lines in $G$. 

We denote by $N_G(v)$ the set of all neighbors of a vertex $v$ in $G$.
For a set of vertices $S$, we denote by $N_G(S)$ (or simply $N(S)$) the set of
all vertices \emph{outside} $S$ having a neighbor in $S$ .
A set $S$ is \emph{dominating} if $S\cup N_G(S)=V(G)$. 

A set of vertices $M$ of a graph $G=(V,E)$ is a 
\emph{module} if for each $a,b\in M$, $u\notin M$, $au\in E$ if and only if $bu\in E$. 
It is a \emph{non-trivial} module if $|V| > |M| \geq 2$.
If $M$ is a dominating set, we call it a \emph{dominating module}. 
In this situation, $N(M)$ is also a module unless $M=V$. 
When $M=\{u,v\}$, we say that  $(u,v)$ is  \emph{a pair of twins}.
If $u$ and $v$ are adjacent they  are called \emph{true} twins; otherwise, they are called \emph{false} twins.
A graph without non-trivial module is called a \emph{prime} graph.

A \emph{bridge} $ab$  is an edge whose deletion disconnects the graph. 
We denote by \emph{$br(G)$} the number of bridges of $G$. 
If $br(G)=0$, we say that $G$ is \emph{bridgeless}. 
If $ab$ is a bridge of a graph $G$, then for every vertex $p \in V(G) \sm \{a,b\}$, we either have $[pab]$ or $[abp]$. 
Hence $\ov{ab}^G=V(G)$ and thus Conjecture~\ref{conjC}  is only interesting for bridgeless graphs.

Let $\cal C$ be the class of graphs $G$ such that every induced subgraph of $G$ is either a chordal graph, has a cut-vertex or a non-trivial module. 
By definition, this class is \emph{hereditary}, that is, if $G\in \cal C$, then every induced subgraph of $G$ is also in $\mathcal C$. 


Let ${\cal F}=\{C_4,K_{2,3},W_4,W'_4,K'_6,K'_8\}$ (see Figure~\ref{fig}).
In this work we prove the following theorem.
\begin{theorem}\label{th:main}
For each connected graph $G\in {\cal C}\setminus \cal F$, $\ell(G)+br(G)\geq |G|$.
\end{theorem}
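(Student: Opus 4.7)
The proof naturally proceeds by induction on $n=|V(G)|$, exploiting the very definition of the class $\mathcal{C}$. Since $G$ is connected and $G\in\mathcal{C}$, one of three structural situations holds at the top level: $G$ is chordal, $G$ has a cut-vertex, or $G$ has a non-trivial module. The role of $\mathcal{F}$ is to absorb the finitely many small graphs for which $\ell(G)+br(G)<|G|$; outside $\mathcal{F}$, each of the three cases should reduce to strictly smaller graphs still in $\mathcal{C}$ thanks to heredity. For the \emph{chordal case} specifically, I would appeal to the Chen--Chv\'atal result for chordal graphs from~\cite{BBCCCCFZ} asserting either $\ell(G)\ge n$ or that $G$ has a universal line; the strengthening to $\ell(G)+br(G)\ge n$ should come from the observation that when a chordal $G$ admits a universal line while producing few distinct lines, many edges are forced to be bridges, so $br(G)$ closes the deficit $n-\ell(G)$.

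For the \emph{cut-vertex case}, I would split $G$ at a cut-vertex $v$ into two proper induced subgraphs $G_1,G_2\in\mathcal{C}$ meeting exactly at $v$. Bridges add cleanly, $br(G)=br(G_1)+br(G_2)$, and $|G|=|G_1|+|G_2|-1$. The key technical observation is that a line $\overline{ab}^{G_i}$ with $a,b\in V(G_i)$ either coincides with $\overline{ab}^G$ or is extended in $G$ by vertices of the opposite side, and this extension happens precisely when $v\in \overline{ab}^{G_i}$; in both sub-cases the trace of the line on $V(G_i)$ is recoverable. This should give a near-injection $\mathcal{L}(G_1)\sqcup\mathcal{L}(G_2)\hookrightarrow\mathcal{L}(G)$, up to a bounded number of controlled identifications, and combined with the inductive hypothesis on each $G_i$ (with a short sub-case whenever $G_i\in\mathcal{F}$) it yields the desired bound for $G$.

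For the \emph{module case}, I would pick a minimal non-trivial module $M$ and remove a single vertex $v\in M$. The graph $G-v$ remains in $\mathcal{C}$ by heredity and, unless it lies in $\mathcal{F}$, inherits the inductive bound; the task is then to show that $v$ contributes exactly one extra line or one extra bridge to $G$. Since the vertices of $M$ are mutually indistinguishable from outside, for any $u\in M\setminus\{v\}$ the line $\overline{uv}^G$ is strongly constrained --- essentially $M$ together with the common outside neighbours --- and should differ from every line of $G-v$, while the case $|M|=2$ reduces to the classical analysis of true and false twins and typically produces either a fresh bridge or an easily identified fresh line.

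The most delicate step is the cut-vertex case: a line of $G_i$ may absorb vertices across $v$, so distinct lines of $G_1$ and $G_2$ could collapse to the same line of $G$, and controlling exactly when this merging occurs is the technical heart of the argument. A secondary but unavoidable task is the base-case verification, where one must exhibit for every $H\in\mathcal{F}$ a violation of $\ell(H)+br(H)\ge|H|$ and confirm that no other small connected graph in $\mathcal{C}$ also violates it, pinning down the list $\{C_4,K_{2,3},W_4,W'_4,K'_6,K'_8\}$ exactly.
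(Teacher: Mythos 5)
Your top-level skeleton (induction on $|G|$, with cases driven by the definition of $\mathcal{C}$: chordal, cut-vertex, non-trivial module) matches the paper, but several of your case arguments would not go through as described. First, you omit the paper's crucial opening step: bridges are handled \emph{separately and first}, by contracting a bridge $u_1u_2$ to get $G'$ with $\ell(G)\ge\ell(G')$ and $br(G)=br(G')+1$, so that all remaining cases may assume $G$ is bridgeless (and, after the cut-vertex case, $2$-connected). This matters concretely for your chordal case: your proposed mechanism --- that a chordal graph with a universal line and few lines must have ``many bridges'' closing the deficit --- is unsubstantiated and is not how the argument works. The paper instead uses, for a $2$-connected chordal $G$, a simplicial vertex $s$ together with the lemma that $\ov{sx}=\ov{sy}$ with $[sxy]$ forces a cut-vertex; this yields $n-1$ distinct lines through $s$ plus one line $\ov{ab}$ (for neighbours $a,b$ of $s$) avoiding $s$, hence $\ell(G)\ge n$ outright. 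Second, in the cut-vertex case your ``near-injection $\mathcal{L}(G_1)\sqcup\mathcal{L}(G_2)\hookrightarrow\mathcal{L}(G)$'' only gives $\ell(G)\ge\ell(G_1)+\ell(G_2)-1$, which suffices when both pieces satisfy the inductive bound but fails when some $G_i\in\mathcal{F}$ (e.g.\ $C_4$ has a deficit of $3$). The paper recovers this by exhibiting $|N_{G_1}(u)|\,|N_{G_2}(u)|\ge 4$ additional lines of the form $\ov{u_1u_2}$ with $u_i$ a neighbour of the cut-vertex $u$ in $G_i$ --- lines you never mention --- and then checking the two residual configurations $C_4+C_4$ and $C_4+K_6'$ by hand.

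The largest gap is in the module case. Your plan (remove $v\in M$, show $v$ contributes one fresh line or bridge) works only when $G$ has a non-trivial \emph{non-dominating} module, because the fresh line the paper produces is $\ov{v_1t}$ for a vertex $t\notin M\cup N(M)$; even there, one must separately rule out bridges of $G-v_1$, which costs a page of argument about the structure of such a bridge. When every non-trivial module is dominating, no such $t$ exists, $G$ has diameter $2$, and the paper switches to an entirely different analysis based on false twins (Lemma~\ref{lem:diamtwo}), splitting into three sub-cases according to whether the non-twin part $R$ is empty, a clique, or neither, and counting lines of the forms $\ov{u_iu_j}$, $\ov{xu_i}$, $\ov{xa}$ explicitly; this is where the exceptional graphs $C_4$, $W_4$, $K_6'$, $K_8'$ actually arise. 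Your proposal does not address this dominating-module regime at all, and it constitutes roughly half of the paper's proof; contrary to your closing remark, it --- not the cut-vertex case --- is the technical heart of the argument.
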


\begin{figure}[h]
\centering
\begin{tikzpicture}[scale=0.6]
\begin{scope}[xshift=0cm]
\vertex[circle,  minimum size=8pt](1) at  (0,0) {};
\vertex[circle,  minimum size=8pt](2) at  (0,2) {};
\vertex[circle,  minimum size=8pt](3) at  (2,2) {};
\vertex[circle,  minimum size=8pt](4) at  (2,0) {};
\draw (1)--(2) -- (3) -- (4)--(1);
\draw (1,-0.7) node{$C_4$}; 
\end{scope}

\begin{scope}[xshift=4cm]
\vertex[circle,  minimum size=8pt](1) at  (0,0) {};
\vertex[circle,  minimum size=8pt](2) at  (0,2) {};
\vertex[circle,  minimum size=8pt](3) at  (2,2) {};
\vertex[circle,  minimum size=8pt](4) at  (2,0) {};
\vertex[circle,  minimum size=8pt](5) at  (1,1) {};
\draw (1)--(2) -- (3) -- (4)--(1);
\draw (1)--(5) -- (3);
\draw (1,-0.7) node{$K_{2,3}$}; 
\end{scope}

\begin{scope}[xshift=8cm]
\vertex[circle,  minimum size=8pt](1) at  (0,0) {};
\vertex[circle,  minimum size=8pt](2) at  (0,2) {};
\vertex[circle,  minimum size=8pt](3) at  (2,2) {};
\vertex[circle,  minimum size=8pt](4) at  (2,0) {};
\vertex[circle,  minimum size=8pt](5) at  (1,1) {};
\draw (1)--(2) -- (3) -- (4)--(1);
\draw (1)--(5) -- (3);
\draw (2)--(5) ;
\draw (1,-0.7) node{$W'_4$}; 
\end{scope}

\begin{scope}[xshift=12cm]
\vertex[circle,  minimum size=8pt](1) at  (0,0) {};
\vertex[circle,  minimum size=8pt](2) at  (0,2) {};
\vertex[circle,  minimum size=8pt](3) at  (2,2) {};
\vertex[circle,  minimum size=8pt](4) at  (2,0) {};
\vertex[circle,  minimum size=8pt](5) at  (1,1) {};
\draw (1)--(2) -- (3) -- (4)--(1);
\draw (1)--(5) -- (3);
\draw (2)--(5) -- (4);
\draw (1,-0.7) node{$W_4$}; 
\end{scope}

\begin{scope}[xshift=0.5cm, yshift=-4.8cm]
\vertex[circle,  minimum size=8pt](1) at  (0,0) {};
\vertex[circle,  minimum size=8pt](2) at  (2,0) {};
\vertex[circle,  minimum size=8pt](3) at  (4,0) {};
\vertex[circle,  minimum size=8pt](4) at  (0,2) {};
\vertex[circle,  minimum size=8pt](5) at  (2,2) {};
\vertex[circle,  minimum size=8pt](6) at  (4,2) {};
\draw (1)--(5) -- (3) -- (4)--(2)--(6)--(1) -- (2) -- (3);
\draw (4)--(5)--(6);
\draw(1)  to[bend right=20]   (3);  
\draw (4)  to[bend left=20]   (6);  
\draw (2,-1.1) node{$K_6'$}; 
\end{scope}

\begin{scope}[xshift=7cm, yshift=-4.8cm]
\vertex[circle,  minimum size=8pt](1) at  (0,0) {};
\vertex[circle,  minimum size=8pt](2) at  (2,0) {};
\vertex[circle,  minimum size=8pt](3) at  (4,0) {};
\vertex[circle,  minimum size=8pt](4) at  (6,0) {};
\vertex[circle,  minimum size=8pt](5) at  (0,2) {};
\vertex[circle,  minimum size=8pt](6) at  (2,2) {};
\vertex[circle,  minimum size=8pt](7) at  (4,2) {};
\vertex[circle,  minimum size=8pt](8) at  (6,2) {};
\draw (1)--(6)--(3) --(8) --(2) --(5) --(4) --(7) --(1) --(8)  ;
\draw (5)--(3);
\draw (4)--(6);
\draw (2)--(7);
\draw (1)--(2)--(3)--(4);
\draw (5)--(6)--(7)--(8);
\draw(1)  to[bend right=18]   (3);  
\draw (5)  to[bend left=18]   (7);  
\draw(2)  to[bend right=18]   (4);  
\draw (6)  to[bend left=18]   (8);  
\draw(1)  to[bend right=22]   (4);  
\draw (5)  to[bend left=22]   (8);
\draw (3,-1.4) node{$K_8'$}; 
\end{scope}
\end{tikzpicture}
  \caption{Graphs in  ${\cal F}$.}
  \label{fig}
\end{figure}

As a consequence we have that Chen-Chvatal conjecture holds for 
 $\cal C$, as it holds for graphs in  $\cal F$ (they all have a universal line). 
Since all distance-hereditary graphs contain either a pendant edge or a pair of twins, $\mathcal C$ is a super class of distance hereditary graphs. It is also clearly a super class of chordal graphs. \\

The difference between our result and the original conjecture is that we count a universal line as any other line but, since each bridge defines a universal line, we count it with multiplicity. It is tempting to conjecture that the property $ \ell(G)+br(G)\geq |G|$ holds for all graphs but a finite number. We know this to be false, as it was pointed out to us by Yori Zwols, owing to a simple observation: a counter-example containing a bridge produces an infinite number of counter-examples from replacing a bridge by a path of arbitrary length. So far, the three known minimal counter-examples containing a bridge are shown in Figure \ref{fig:ceb}. It remains unknown however whether all counter-examples to $\ell(G)+br(G)\geq |G|$ can be obtained from a finite set of graphs upon replacing a bridge by a path. \\

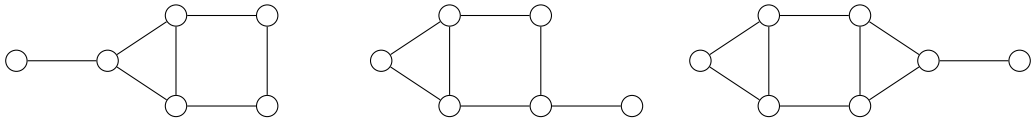
\begin{figure}[h]\label{fig:ceb}
\centering
\begin{tikzpicture}[scale=0.6]
\begin{scope}[xshift=0cm]
\vertex[circle,  minimum size=8pt](1) at  (0,0) {};
\vertex[circle,  minimum size=8pt](2) at  (0,2) {};
\vertex[circle,  minimum size=8pt](3) at  (2,2) {};
\vertex[circle,  minimum size=8pt](4) at  (2,0) {};
\vertex[circle,  minimum size=8pt](5) at  (-1.5,1) {};
\vertex[circle,  minimum size=8pt](6) at  (-3.5,1) {};
\draw (1)--(2) -- (3) -- (4)--(1);
\draw (1)--(5) -- (2);
\draw (5) -- (6);
\draw (1,-1) ;
\end{scope}

\begin{scope}[xshift=6cm]
\vertex[circle,  minimum size=8pt](1) at  (0,0) {};
\vertex[circle,  minimum size=8pt](2) at  (0,2) {};
\vertex[circle,  minimum size=8pt](3) at  (2,2) {};
\vertex[circle,  minimum size=8pt](4) at  (2,0) {};
\vertex[circle,  minimum size=8pt](5) at  (-1.5,1) {};
\vertex[circle,  minimum size=8pt](6) at  (4,0) {};
\draw (1)--(2) -- (3) -- (4)--(1);
\draw (1)--(5) -- (2);
\draw (4) -- (6);
\draw (1,-1) ;
\end{scope}

\begin{scope}[xshift=13cm]
\vertex[circle,  minimum size=8pt](1) at  (0,0) {};
\vertex[circle,  minimum size=8pt](2) at  (0,2) {};
\vertex[circle,  minimum size=8pt](3) at  (2,2) {};
\vertex[circle,  minimum size=8pt](4) at  (2,0) {};
\vertex[circle,  minimum size=8pt](5) at  (-1.5,1) {};
\vertex[circle,  minimum size=8pt](6) at  (3.5,1) {};
\vertex[circle,  minimum size=8pt](7) at  (5.5,1) {};
\draw (1)--(2) -- (3) -- (4)--(1);
\draw (1)--(5) -- (2);
\draw (3)--(6) -- (4);
\draw (6) -- (7);
\draw (1,-1) ;
\end{scope}
\end{tikzpicture}
\caption{The three known minimal counter-examples with a bridge to $\ell(G)+br(G)\geq |G|$.}

\end{figure}
 
\noindent Since for these three graphs, the bridge is a pendant edge, we venture to propose the following conjecture.

\begin{conjecture}\label{con:amrz}
There is a finite set of graphs ${\cal F}_0$ such that every connected graph $G \notin \mathcal F_0$ either has a pendant edge or satisfies  $\ell(G)+br(G)\geq |G|$.
\end{conjecture}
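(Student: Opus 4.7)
The plan is to proceed by induction on $|V(G)|$, with small graphs handled directly and chordal graphs serving as a second base case, since for chordal graphs the bound $\ell(G) + br(G) \geq |G|$ is established in~\cite{BBCCCCFZ}. For the inductive step, take $G \in \mathcal{C} \setminus \mathcal{F}$ and invoke the defining property of $\mathcal{C}$: either $G$ is chordal, or $G$ has a cut-vertex, or $G$ admits a non-trivial module. Hereditary closure of $\mathcal{C}$ ensures that the smaller graphs produced by decomposition remain in $\mathcal{C}$, so the induction is well-posed provided we check that we do not land in $\mathcal{F}$.

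In the cut-vertex case, write $G = G_1 \cup G_2$ with $V(G_1) \cap V(G_2) = \{c\}$, and note $|V(G)| = |V(G_1)| + |V(G_2)| - 1$ and $br(G) = br(G_1) + br(G_2)$. Since $c$ separates the two sides, lines $\overline{uv}^{G_i}$ that avoid $c$ persist unchanged in $G$, while lines through $c$ are augmented by the opposite side's vertices but remain pairwise distinct within each side. After accounting for the at most one collision between a line of $G_1$ through $c$ and a line of $G_2$ through $c$, combining the two inductive bounds yields the desired inequality. The edge cases where one of $G_1, G_2$ is a single edge, or sits in $\mathcal{F}$, must be treated by hand, using the small size of such pieces.

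For the module case, let $M$ be a non-trivial module and form the quotient $G' = G/M$, which replaces $M$ by a single vertex $m$. Because $M$ is a module, the distance between any two vertices outside $M$ is the same in $G$ as in $G'$, so lines of $G'$ lift to lines of $G$ by substituting $M$ for $m$, and distinct lines lift to distinct lines. Lines entirely inside $M$ are governed by $G[M]$, possibly augmented by a common ``halo'' in $N(M)$, which lets us import the inductive bound for $G[M]$ as well. A careful count of how many lines arise from $G'$, from $G[M]$, and from pairs bridging the two then recovers the required $|V(G)|$ lines, with bridges and their contribution to $br(G)$ tracked along the way.

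The main obstacle is controlling the exceptional family $\mathcal{F}$ throughout the recursion. When the quotient $G' = G/M$, or a block $G_i$ in a cut-vertex split, falls into $\mathcal{F}$, the inductive hypothesis is unavailable for that piece, and we must argue that the extra structure coming from $G[M]$ (respectively, the other side of the cut) creates enough new lines in $G$ to offset the deficit from the $\mathcal{F}$-piece. This forces a delicate case analysis over the six graphs of $\mathcal{F}$, over their possible roles as quotients or as blocks, and over whether the module is made of true twins, false twins, or something larger; this is where I expect most of the technical work of the proof to concentrate.
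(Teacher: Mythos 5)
There is a fundamental mismatch between what you set out to prove and the statement at hand. The statement is Conjecture~\ref{con:amrz}, which concerns \emph{every} connected graph $G$ outside some finite exceptional set ${\cal F}_0$; the paper states it as an open conjecture and offers no proof of it. Your induction, however, begins by taking ``$G \in \mathcal{C}\setminus\mathcal{F}$'' and invoking ``the defining property of $\mathcal{C}$: either $G$ is chordal, or $G$ has a cut-vertex, or $G$ admits a non-trivial module.'' That trichotomy is precisely the \emph{definition} of membership in the restricted class $\mathcal{C}$, not a fact about arbitrary connected graphs. A prime, $2$-connected, non-chordal graph (e.g.\ $C_5$, the Petersen graph, or any sufficiently generic graph) falls into none of your three cases, so the inductive step simply has nothing to say about it. What you have sketched is, in effect, an outline of the paper's Theorem~\ref{th:main} (whose actual proof does follow your cut-vertex/module/chordal decomposition, with the $\mathcal{F}$-exceptions handled by Lemmas~\ref{lem:smallcase} and~\ref{lem:smallcaseextension}), not a proof of the conjecture.

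Two further points. First, the conjecture's quantification is different from Theorem~\ref{th:main}'s: it allows an arbitrary finite exceptional family ${\cal F}_0$ and an escape clause for graphs with a pendant edge (needed because, as the paper notes, subdividing a bridge in a counterexample produces infinitely many counterexamples, all with pendant edges); your argument never engages with either of these features, and in particular never explains why the exceptional set produced by your recursion would be finite over all graphs rather than just over $\mathcal{C}$. Second, even within $\mathcal{C}$ your base case is slightly off: the chordal-graph result of \cite{BBCCCCFZ} gives ``universal line or $n$ lines,'' which is weaker than $\ell(G)+br(G)\ge|G|$; the paper's Part~3 instead reruns the simplicial-vertex argument for $2$-connected chordal graphs to get $n$ pairwise distinct lines directly. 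In short: the approach cannot close the gap between $\mathcal{C}$ and the class of all connected graphs, which is exactly the gap that makes this a conjecture rather than a theorem.
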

 
So far, we know that if such a family ${\cal F}_0$ exists, it contains the  list of graphs in Figure~\ref{fig} and Figure ~\ref{fig2}. An interesting variation of the conjecture can be stated as follows, denoting by $ul(G)$ the number of pairs of vertices in $G$ that induce a universal line.

\begin{conjecture}\label{con:amrz2}
For every connected graph $G$, $\ell(G)+ul(G)\geq | G |$.
\end{conjecture}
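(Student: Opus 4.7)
The key observation underpinning the plan is the inequality $ul(G) \ge br(G)$, valid for every connected graph: each bridge $ab$ satisfies $\overline{ab}^G = V(G)$ by the remark preceding the introduction of $\mathcal{C}$, and distinct bridges are distinct pairs of vertices, each of which therefore contributes to $ul(G)$. Consequently, Theorem~\ref{th:main} immediately implies Conjecture~\ref{con:amrz2} for every $G \in \mathcal{C} \setminus \mathcal{F}$, since $\ell(G) + ul(G) \ge \ell(G) + br(G) \ge |G|$. The six exceptional graphs in $\mathcal{F}$ each possess a universal line, and a direct enumeration in each case (all of them have few vertices) shows that $ul(G)$ alone is already large enough to compensate for the small value of $\ell(G)$ and yield $\ell(G) + ul(G) \ge |G|$. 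This disposes of the conjecture inside $\mathcal{C}$ with essentially no additional work.

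For graphs outside $\mathcal{C}$, the plan is strong induction on $|V(G)|$, mirroring the case analysis used in Theorem~\ref{th:main} but systematically replacing bridge accounting by universal-pair accounting. If $G$ contains a pair of twins $(u,v)$, one removes $v$ and compares $\ell(G)+ul(G)$ with $\ell(G-v)+ul(G-v)+1$; twins force many lines and many universal pairs through $v$, which should suffice to restore the lost unit. If $G$ has a cut-vertex $c$, one decomposes $G$ at $c$ and glues the inequalities on the blocks, exploiting the fact that every pair $(x,y)$ with $x$ and $y$ in distinct blocks induces a line passing through $c$ that is very often universal. If $G$ admits a non-trivial dominating module $M$, one contracts $M$ to a single representative and recovers the missing contribution from the lines defined by pairs inside $M$ together with the structured set $N(M)$.

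The hardest case, and the main obstacle, is the \emph{prime, $2$-connected} case when $ul(G)=0$. In that regime Conjecture~\ref{con:amrz2} degenerates into Conjecture~\ref{conjC} restricted to graph metrics, for which only the sublinear lower bound $\Omega(n^{4/7})$ is currently available. Hence any complete proof of Conjecture~\ref{con:amrz2} would in particular resolve Chen-Chv\'atal for graph metrics. A more realistic intermediate programme is therefore to enlarge $\mathcal{C}$ by closing it under additional decompositions (for instance, gluing along an edge or along a clique, or admitting $1$-joins) and to propagate the inequality $\ell+ul\ge n$ through $ul \ge br$ exactly as in the bound above, thereby extending the reach of Theorem~\ref{th:main} while sidestepping the general Chen-Chv\'atal obstruction.
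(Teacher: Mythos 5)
The statement you were asked to prove is not a theorem of the paper: it is Conjecture~\ref{con:amrz2}, which the authors explicitly leave open (they only remark that it is stronger than Chen--Chv\'atal and that no counter-example is known). So there is no proof in the paper to compare against, and your proposal does not supply one either. Your first paragraph is sound as far as it goes: $ul(G)\ge br(G)$ is correct (each bridge is a distinct pair inducing the universal line), so Theorem~\ref{th:main} does give $\ell(G)+ul(G)\ge|G|$ on $\mathcal C\setminus\mathcal F$, and the six graphs of $\mathcal F$ can be checked directly (e.g.\ for $C_4$ all six pairs induce the universal line, so $\ell+ul=7\ge 4$; for $K_6'$ the three false-twin pairs give $ul\ge 3$ and $\ell=4$). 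That correctly establishes the conjecture on the class $\mathcal C$, but that is exactly the content the paper already has.

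The genuine gap is the one you name yourself: when $G$ is prime, $2$-connected and has no universal line, $ul(G)=0$ and the inequality collapses to $\ell(G)\ge|G|$, i.e.\ the Chen--Chv\'atal conjecture for graph metrics, for which only the bound $\Omega(n^{4/7})$ is known. The intermediate cases of your induction (twins, cut-vertices, dominating modules) are also only sketched -- for instance the claim that a cut-vertex decomposition ``very often'' yields universal lines is not an argument, and in the bridgeless cut-vertex case of the paper's own Part~2 the new lines $\ov{u_1u_2}$ are explicitly \emph{not} universal. So the proposal should be read as a reduction of Conjecture~\ref{con:amrz2} to its prime $2$-connected core (a reasonable observation, essentially parallel to the structure of the paper's Theorem~\ref{th:main}), not as a proof of the statement.
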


Although less general (a bridge always induces a universal line but not all universal lines are induced by bridges), this conjecture has the merit of being true for all the known graphs in $\mathcal F$. Thus, there is no known counter-examples to Conjecture \ref{con:amrz2} to this day. Moreover, it remains stronger than the original Chen-Chvatal conjecture without ruling out graphs with universal lines as trivial solutions.

\begin{figure}[h]\label{fig2}
\centering
\begin{tikzpicture}[scale=0.5]
\begin{scope}[xshift=0cm]
\vertex[circle,  minimum size=8pt](1) at  (0,0) {};
\vertex[circle,  minimum size=8pt](2) at  (0,2) {};
\vertex[circle,  minimum size=8pt](3) at  (2,2) {};
\vertex[circle,  minimum size=8pt](4) at  (2,0) {};
\vertex[circle,  minimum size=8pt](5) at  (-1.5,1) {};
\draw (1)--(2) -- (3) -- (4)--(1);
\draw (1)--(5) -- (2);
\draw (1,-1) node{$H_5$}; 
\end{scope}

\begin{scope}[xshift=6cm]
\vertex[circle,  minimum size=8pt](1) at  (0,0) {};
\vertex[circle,  minimum size=8pt](2) at  (0,2) {};
\vertex[circle,  minimum size=8pt](3) at  (2,2) {};
\vertex[circle,  minimum size=8pt](4) at  (2,0) {};
\vertex[circle,  minimum size=8pt](5) at  (-1.5,1) {};
\vertex[circle,  minimum size=8pt](6) at  (3.5,1) {};
\draw (1)--(2) -- (3) -- (4)--(1);
\draw (1)--(5) -- (2);
\draw (3)--(6) -- (4);
\draw (1,-1) node{$H^1_6$}; 
\end{scope}

\begin{scope}[xshift=12cm, yshift=0cm]
\vertex[circle,  minimum size=8pt](1) at  (0,0) {};
\vertex[circle,  minimum size=8pt](2) at  (2,0) {};
\vertex[circle,  minimum size=8pt](3) at  (4,0) {};
\vertex[circle,  minimum size=8pt](4) at  (0,2) {};
\vertex[circle,  minimum size=8pt](5) at  (2,2) {};
\vertex[circle,  minimum size=8pt](6) at  (4,2) {};
\draw (4)--(5)--(6);
\draw (1)--(2)--(3);
\draw (1)--(4);
\draw (2)--(5);
\draw (3)--(6);
\draw(1)  to[bend right=25]   (3);  
\draw (4)  to[bend left=25]   (6); 
\draw (2,-1.2) node{$H^2_6$};  
\end{scope}

\begin{scope}[xshift=-3cm, yshift=-5cm]
\vertex[circle,  minimum size=8pt](1) at  (0,0) {};
\vertex[circle,  minimum size=8pt](2) at  (2,0) {};
\vertex[circle,  minimum size=8pt](3) at  (4,0) {};
\vertex[circle,  minimum size=8pt](4) at  (6,0) {};
\vertex[circle,  minimum size=8pt](5) at  (0,2) {};
\vertex[circle,  minimum size=8pt](6) at  (2,2) {};
\vertex[circle,  minimum size=8pt](7) at  (4,2) {};
\vertex[circle,  minimum size=8pt](8) at  (6,2) {};
\draw (1)--(2)--(3) --(4) --(8) --(7) --(6) --(5) --(1);
\draw (2)--(7)--(3)--(6)--(2) ;
\draw (3,-1.2) node{$ H_8^1$};  
\end{scope}

\begin{scope}[xshift=5cm, yshift=-5cm]
\vertex[circle,  minimum size=8pt](1) at  (0,0) {};
\vertex[circle,  minimum size=8pt](2) at  (2,0) {};
\vertex[circle,  minimum size=8pt](3) at  (4,0) {};
\vertex[circle,  minimum size=8pt](4) at  (6,0) {};
\vertex[circle,  minimum size=8pt](5) at  (0,2) {};
\vertex[circle,  minimum size=8pt](6) at  (2,2) {};
\vertex[circle,  minimum size=8pt](7) at  (4,2) {};
\vertex[circle,  minimum size=8pt](8) at  (6,2) {};
\draw (1)--(2)--(3) --(4) --(8) --(7) --(6) --(5) --(1);
\draw (1)--(6)--(2)--(5);
\draw (4)--(7)--(3)--(8);
\draw (5)  to[bend left=22]   (8);
\draw (3,-1.2) node{$ H_8^2$};  
\end{scope}

\begin{scope}[xshift=13cm, yshift=-5cm]
\vertex[circle,  minimum size=8pt](1) at  (0,0) {};
\vertex[circle,  minimum size=8pt](2) at  (2,0) {};
\vertex[circle,  minimum size=8pt](3) at  (4,0) {};
\vertex[circle,  minimum size=8pt](4) at  (6,0) {};
\vertex[circle,  minimum size=8pt](5) at  (0,2) {};
\vertex[circle,  minimum size=8pt](6) at  (2,2) {};
\vertex[circle,  minimum size=8pt](7) at  (4,2) {};
\vertex[circle,  minimum size=8pt](8) at  (6,2) {};
\draw (1)--(2)--(3) --(4) --(8) --(7) --(6) --(5) --(1);
\draw (1)--(6)--(2)--(5);
\draw (4)--(7)--(3)--(8);
\draw (5)  to[bend left=22]   (8);
\draw (1)  to[bend right=22]   (4);
\draw (3,-1.4) node{$ H_8^3$};  
\end{scope}
\end{tikzpicture}
  \caption{Known graphs in  ${\cal F}_0 \setminus \mc F$.}
  
\end{figure}

\section{Preliminaries}

In this section, we give some results on the number of lines of graphs in $\mc F$ or that are constructed from a graph in $\cal F$ adding a vertex. The proof of the following  Lemma is done by brute force using a computer \footnote{
The details can be found \href{http://www.math.sciences.univ-nantes.fr/~rochet/recherche/Lines_in_F.pdf}{\url{http://www.math.sciences.univ-nantes.fr/~rochet/recherche/Lines\_in\_F.pdf.}}}.
(See Figure 4.)

\begin{lemma}\label{lem:smallcase} $\ell(C_4)=1$, $\ell(K'_6)=4$ and 
for $H \in \mc F \setminus \{ C_4,K'_6 \}$, $\ell(H) = | H | -1$. 
\end{lemma}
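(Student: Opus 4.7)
The lemma is a finite verification for six specific graphs, so the plan is to compute, for each $H \in \mc F$, the line $\ov{uv}^H$ for a representative pair from each orbit of $\mathrm{Aut}(H)$ acting on unordered vertex pairs, and then tally the distinct lines. Exploiting the symmetries of each graph reduces the work to a handful of cases.

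For the five-vertex members $C_4, K_{2,3}, W_4, W'_4$, I would first record the distance table by inspection. For $C_4$, every third vertex sits at distance one from one endpoint and distance two from the other (or is equidistant at distance one via the cycle), so every line equals the whole vertex set and $\ell(C_4)=1$. For $K_{2,3}$, pairs split into orbits by bipartition membership; a quick case check shows that each line through a mixed pair and the line through the two degree-$3$ vertices are all universal, while the three lines joining two degree-$2$ vertices each exclude exactly the remaining degree-$2$ vertex, giving $4=|H|-1$ lines. A similar orbit analysis handles $W_4$ (three orbits: hub--rim, rim-adjacent, rim-opposite) and $W'_4$ (a few more orbits, since the missing spoke breaks some symmetry), with routine betweenness checks in each case confirming $\ell=4$.

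The structural content of the proof concerns $K'_6$ and $K'_8$, which are of the form $K_{2n}$ minus a perfect matching $M$. Write $m(x)$ for the $M$-partner of $x$. Distances are especially simple: $d(x,y)=2$ if $\{x,y\}\in M$, and $d(x,y)=1$ otherwise. This lets me describe every line explicitly. For a matched pair $\{a,m(a)\}$, any third vertex $x$ is adjacent to both $a$ and $m(a)$, hence $[a\,x\,m(a)]$ holds and $\ov{a\,m(a)}=V$ is universal. For a non-matched (hence adjacent) pair $\{a,b\}$, the only additional vertices on the line are $m(a)$, placed by $[m(a)\,a\,b]$, and $m(b)$, placed by $[a\,b\,m(b)]$; no other vertex $x$ yields a valid betweenness, since such $x$ is adjacent to both $a$ and $b$, so the three pairwise distances among $a,b,x$ all equal $1$, incompatible with any of the required sum equalities. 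Consequently, the non-universal lines are in bijection with unordered pairs of matching edges, giving $\binom{3}{2}+1=4$ lines for $K'_6$ and $\binom{4}{2}+1=7=|K'_8|-1$ lines for $K'_8$.

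The only obstacle is bookkeeping rather than ideas: each case reduces to a finite betweenness verification. Orbit reduction keeps the small cases essentially immediate, and the $K_{2n}$-minus-matching description turns the two non-generic counts $\ell(K'_6)=4$ and $\ell(K'_8)=7$ into transparent binomial calculations, matching the brute-force computer check the authors cite.
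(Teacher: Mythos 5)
Your proposal is correct in its conclusions, but it takes a genuinely different route from the paper: the paper offers no hand proof of this lemma at all, delegating the entire verification to a brute-force computer check (with the resulting lines displayed in a figure), whereas you carry out the verification by hand using orbit reduction and, for $K'_6$ and $K'_8$, the clean structural description as $K_{2n}$ minus a perfect matching. What your approach buys is a human-checkable and reusable argument --- the observation that in $K_{2n}$ minus a matching the non-universal lines are exactly the unions of two matching pairs turns the counts into $\binom{n}{2}+1$ and explains \emph{why} $K'_6$ is deficient while $K'_8$ is not; what the paper's approach buys is certainty with zero case analysis. One slip to fix: your betweenness certificates for the matched partners are swapped. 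For an adjacent pair $\{a,b\}$, the partner $m(a)$ lies on $\ov{ab}$ because $[a\,b\,m(a)]$ holds (indeed $d(a,b)+d(b,m(a))=1+1=2=d(a,m(a))$), not because of $[m(a)\,a\,b]$, which would require $d(m(a),a)+d(a,b)=3$ to equal $d(m(a),b)=1$; symmetrically $m(b)$ enters via $[m(b)\,a\,b]$. The membership conclusions and the final counts are unaffected. Also note that for $W_4$ and $W'_4$ you only assert the outcome of the ``routine betweenness checks''; since these are five-vertex graphs the checks are finite and easily completed (e.g.\ $W'_4$ has exactly the four lines $V$, the rim, the closed neighbourhood of the degree-two vertex, and the two-element line through the non-adjacent pair of the two ``apex'' vertices), but a complete writeup should record them.
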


\noindent

\begin{lemma}\label{lem:smallcaseextension}
Let $G\in {\cal C}\setminus \cal F$ be a graph. 

1) If $G$ has a pendant vertex $v$ such that $G-v \in \mathcal{F} \setminus \{ C_4 \}$, then $\ell(G)+br(G)=\ell(G)+1\geq |G|$.

2) If $G$ contains a non-trivial module $M$ and $G-v\in {\cal F}\setminus \{ C_4 \}$, for some
$v\in M$, then $\ell(G)\geq |G|+1$.
\end{lemma}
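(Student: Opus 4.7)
The plan is to reduce both parts to finite case analyses over the five graphs in $\mathcal F\setminus\{C_4\}$, using as a common tool an injection $\mathcal L(H)\hookrightarrow \mathcal L(G)$, where $H := G-v$.

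First I would establish a common preparatory fact: $d_G(a,b)=d_H(a,b)$ for all $a,b\in V(H)$, so $\ov{ab}^G\cap V(H)=\ov{ab}^H$. In Part 1 this is immediate since $v$ is a leaf. In Part 2 it follows from a short rerouting argument: given a shortest $ab$-path in $G$ through $v$, the segment $x-v-y$ around $v$ can always be replaced by a segment of the same length that avoids $v$, either through some $v'\in M\setminus\{v\}$ when $x,y\notin M$ (using that $v$ and $v'$ share external neighbors) or through a common external neighbor of $M$ when $x$ or $y$ lies in $M$ (such a neighbor exists since $M\subsetneq V(G)$ and $G$ is connected). Consequently the map $\ov{ab}^H\mapsto \ov{ab}^G$ is injective, and together with Lemma~\ref{lem:smallcase} this gives the free lower bound $\ell(G)\geq \ell(H)\geq|H|-2$.

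For Part 1, since $H$ is $2$-edge-connected the pendant edge $uv$ is the unique bridge of $G$, so $br(G)=1$ and it suffices to produce at least one extra line when $H\in\mathcal F\setminus\{C_4,K'_6\}$ and at least two when $H=K'_6$. For each $H$ and each orbit of attachment vertex $u$ under $\mathrm{Aut}(H)$, the extra lines come from $\ov{vw}^G$ for well-chosen $w\in V(H)$. A guiding observation is that $v\in\ov{xy}^G$ iff $u\in\{x,y\}$ or $[uxy]$ or $[xyu]$ holds in $H$; hence the universal line $\ov{xy}^H$ of $H$ extends to $V(G)$ only when $u$ is not strictly between $x$ and $y$ on some geodesic, so depending on $u$ the bridge line $\ov{uv}^G=V(G)$ may or may not be already a new line. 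The remaining bookkeeping is a finite check.

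For Part 2, the bound $\ell(G)\geq |H|+2$ demands three additional lines in general and four when $H=K'_6$. The key symmetry here is that for each $w\in V(H)\setminus\{v'\}$ with $v'\in M\setminus\{v\}$, the lines $\ov{vw}^G$ and $\ov{v'w}^G$ are swapped under the permutation $v\leftrightarrow v'$ and are therefore distinct, producing many candidate new lines through $v$; what remains is to verify that they do not coincide with extensions of $H$-lines. I would split into $|M|=2$ (true or false twins) and $|M|\geq 3$; the latter case forces $M\setminus\{v\}$ to be a non-trivial module of $H$, which further restricts the admissible $H$ and module. The main obstacle is the volume of case-checking, particularly for $K'_6$ and $K'_8$, where the required count is tight and coincidences between lines through $v$ and extensions of $H$-lines must be tracked carefully; as with Lemma~\ref{lem:smallcase}, this is most naturally handled by computer.
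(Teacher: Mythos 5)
Your proposal follows the same route as the paper: Part~1 is settled by lifting the $\ell(G-v)$ lines of $H=G-v$ into $G$ and exhibiting new lines through $v$ (the paper does this uniformly by observing that the lines $\ov{vw}^G$, for $w$ ranging over $N_{H}(u)$, are pairwise distinct and not in $\mathcal L(H)$, and that every graph in $\mathcal F$ has minimum degree at least $2$), while Part~2 is reduced to a finite, computer-assisted check over the graphs of $\mathcal F\setminus\{C_4\}$ with a twin attached, exactly as in the paper. One assertion in your sketch is false and should be removed: two lines interchanged by the automorphism swapping $v$ and $v'$ need not be distinct --- for instance, if $v,v'$ are false twins and $w$ is a common neighbour, then $[vwv']$ gives $v'\in\ov{vw}^G$ and hence $\ov{vw}^G=\ov{v'w}^G$ --- but since you only use this to generate candidates and defer the actual counting to the computer, it does not affect the validity of the overall plan.
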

\begin{myp}{} The proofs of both statements are easy although tedious. In the first case, if $u$ is the neighbor of $v$ in $G$, then $\ol{wv}^G$ defines
different lines, when $w$ varies over the neighbors of $u$ in $G-v$. 
These lines are not in ${\cal L}(G-v)$ if $G-v \in  \mathcal{F} \setminus \{ C_4 \}$ . Since the graphs in $\mathcal{F} $ have no vertex of degree one, we obtain at least two new lines. 

In the second case, for each $G'\in \cal F$ we need to consider  all graphs
$G$ arising from $G'$ by adding a copy $v$ of a vertex $v'$ in $G'$ so as $(v,v')$ is a pair of twins (true or false) in $G$. We do this with the help of a computer program \footnote{
For the sake of completeness, the \texttt{R} code and environment used to check all the cases are available in  \href{http://www.math.sciences.univ-nantes.fr/~rochet/recherche/Code_lines.R}{\url{http://www.math.sciences.univ-nantes.fr/~rochet/recherche/Code\_lines.R}} and in  \href{http://www.math.sciences.univ-nantes.fr/~rochet/recherche/env_lines.RData}{\url{
http://www.math.sciences.univ-nantes.fr/~rochet/recherche/env\_lines.RData.}}
}.

\end{myp}

\begin{figure}\label{fig:smallbadcases}
\begin{tabular}[c]{|c|c|}
  \hspace*{-0.6cm} \includegraphics[width=0.55\textwidth]{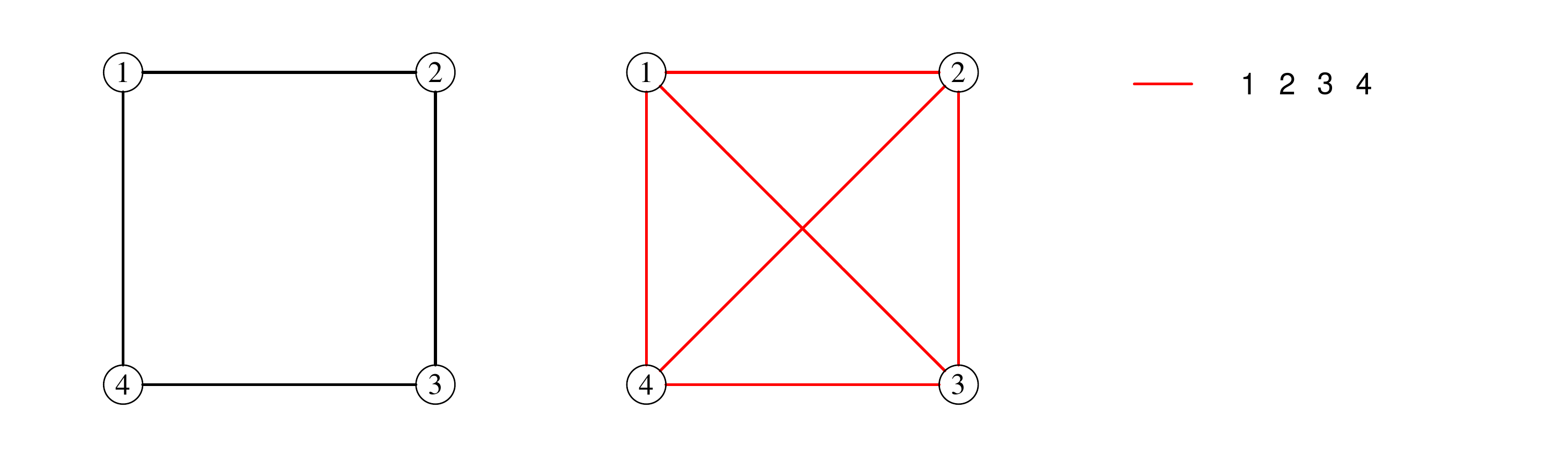} \hspace*{-0.95cm} & 
  \hspace*{-0.6cm} \includegraphics[width=0.55\textwidth]{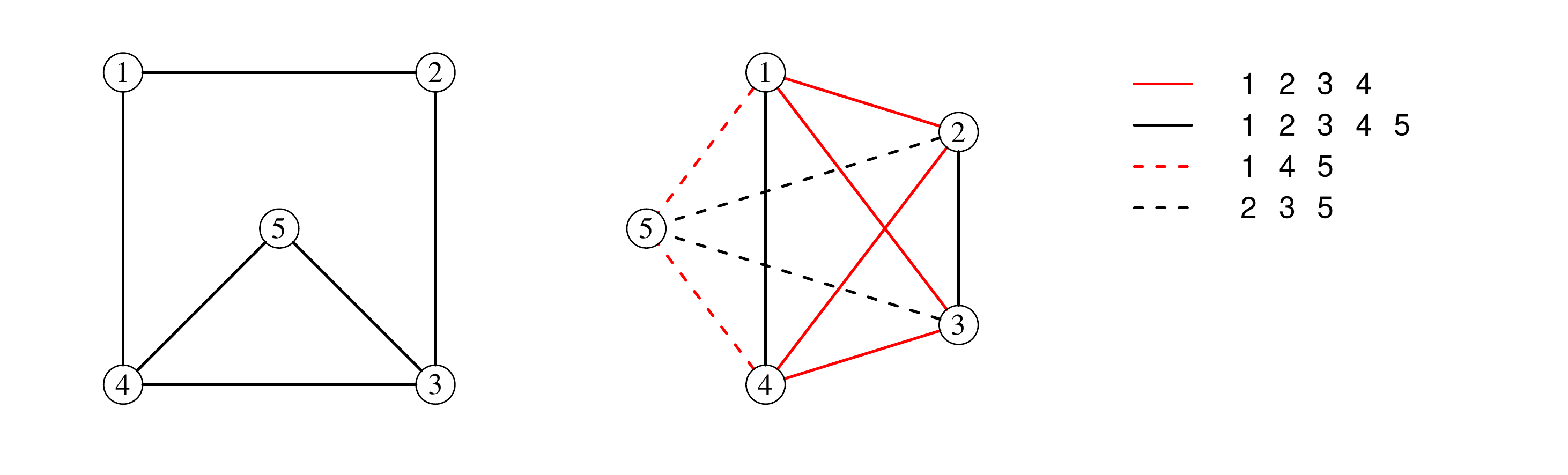} \hspace*{-0.9cm}
 \\ \hline
  \hspace*{-0.6cm} \includegraphics[width=0.55\textwidth]{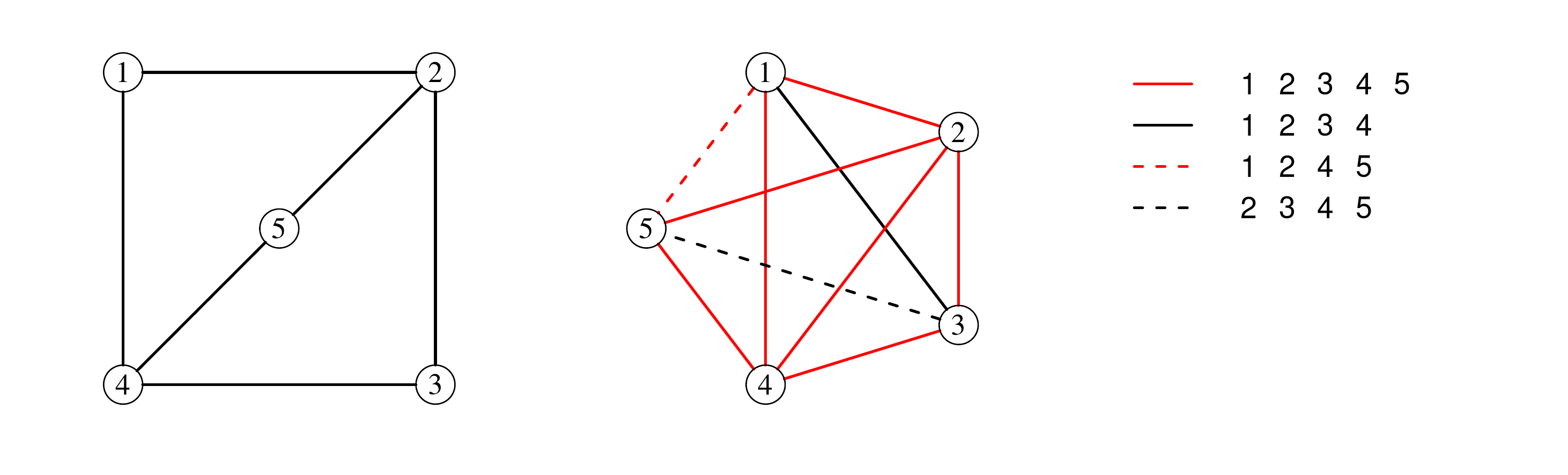} \hspace*{-0.95cm}
 & \hspace*{-0.6cm} \includegraphics[width=0.55\textwidth]{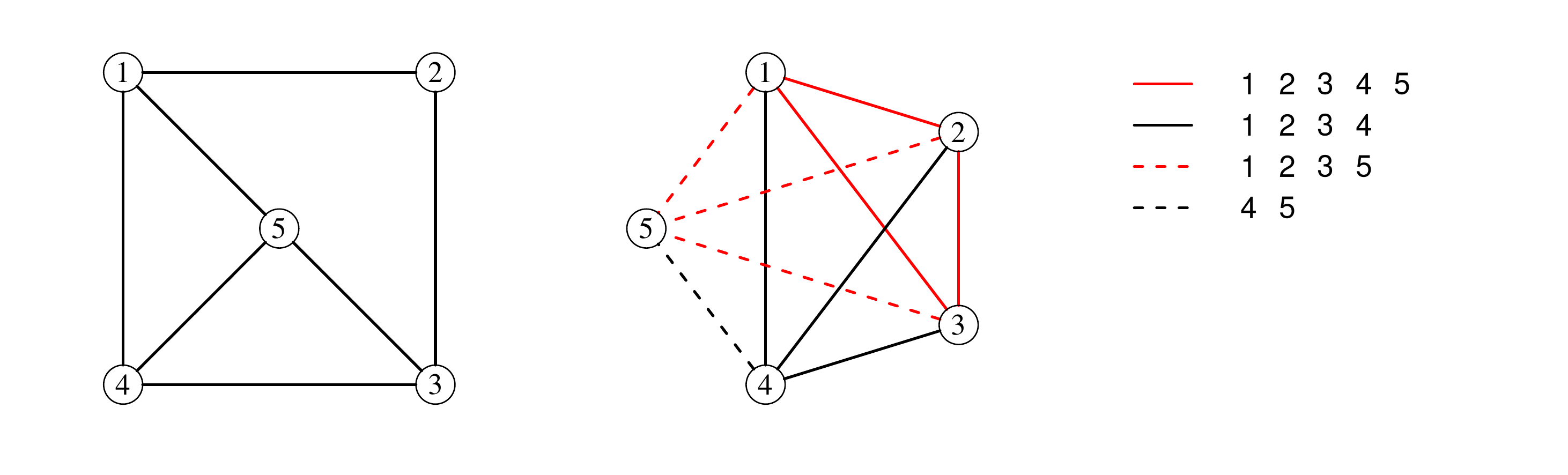} \hspace*{-0.9cm}
 \\ \hline
  \hspace*{-0.6cm} \includegraphics[width=0.55\textwidth]{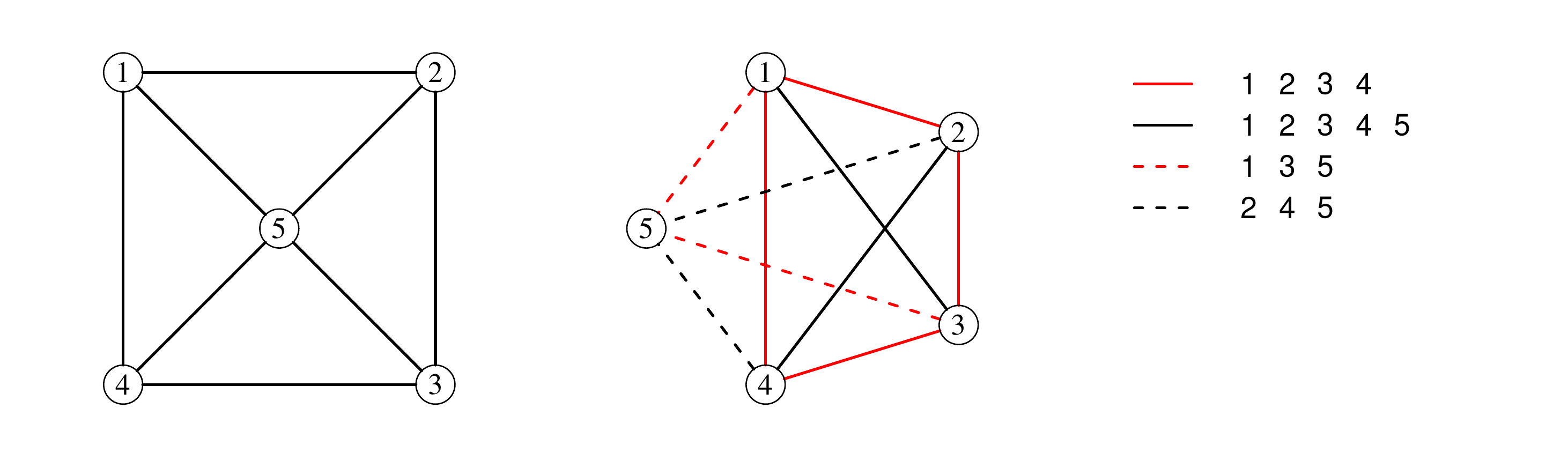} \hspace*{-0.95cm}
& \hspace*{-0.6cm} \includegraphics[width=0.55\textwidth]{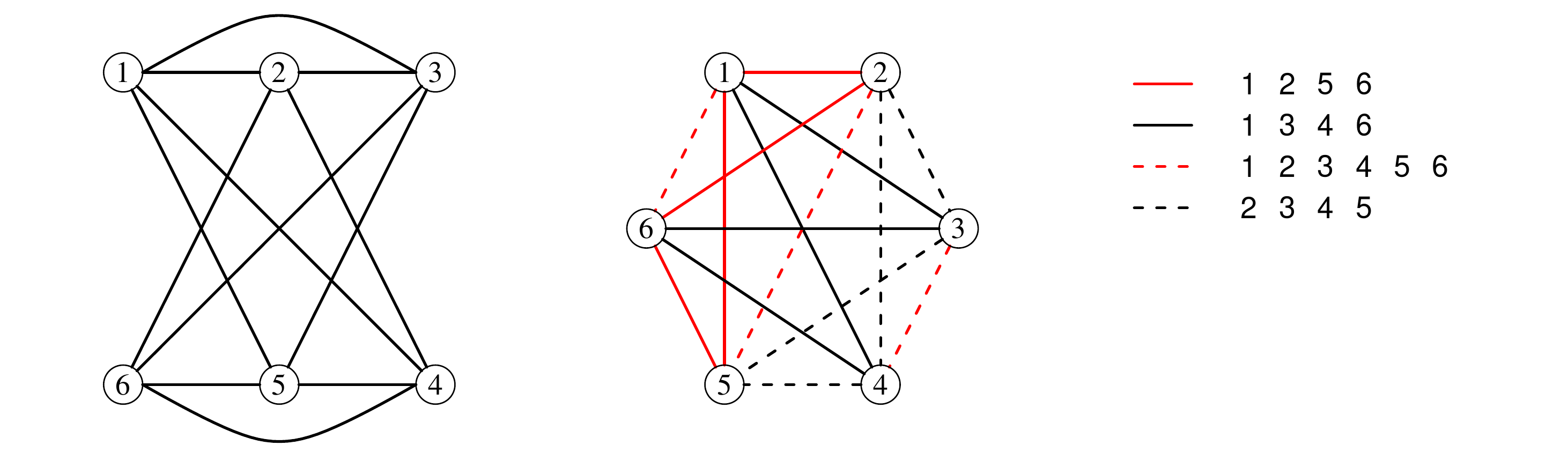} \hspace*{-0.9cm}
\\ \hline
 \multicolumn{2}{|c|}{\includegraphics[width=0.55\textwidth]{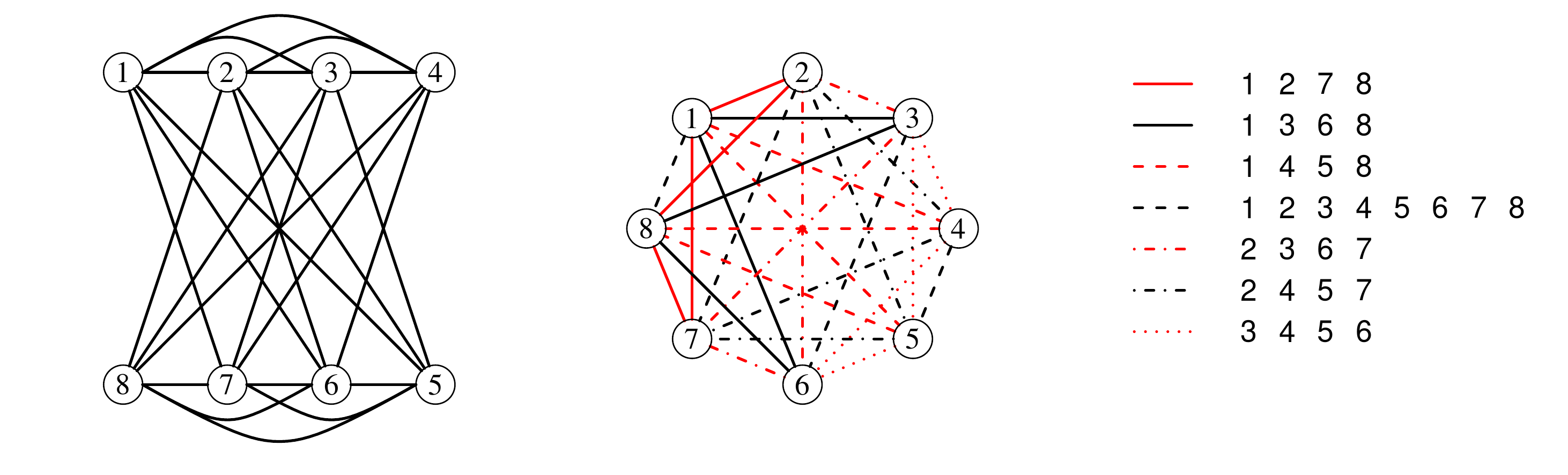} }\\
\hline
\end{tabular}
\caption{Members of ${\cal F}$. For each graph in ${\cal F}$, three drawings appear.
To the left, the graph itself. In the middle, an edge-colored complete
graph where  pairs of vertices defining the same line have the same color.
To the right, the set of vertices in each line (color).}
\end{figure}

\medskip 
\section{Proof of Theorem~\ref{th:main}}

We prove Theorem \ref{th:main} by induction on the number of vertices of $G$. 
Let $G \in \mc C \setminus \mc F$ with $|G|=n$. 
The proof splits in four parts: (1) $G$ has a bridge, (2) $G$ has no bridge and has a  cut-vertex,
(3) $G$ is 2-connected and chordal, (4) $G$ is $2$-connected and has a non trivial module.



\subsubsection*{Part 1: $G$ has a bridge. }

Let $u_1u_2$ be a bridge of $G$. Let $G_1$ and $G_2$ be the connected components of $G-u_1u_2$ that contains respectively $u_1$ and $u_2$.  
To \emph{contract} an edge $e$ of a graph $G$  is to delete $e$ and then identify its ends. 
Let $G'$ be the graph obtained from $G$ by contracting $u_1u_2$. 
Name $u$  the vertex of $G'$ appeared after the contraction. 
The following claim which easy proof is omitted, say that for any two vertices in 
$G'-\{u\}$ the lines $\ov{xy}^G$ and $\ov{xy}^{G'}$ might differ only in $\{u,u_1,u_2\}$.
\[ \ov{xy}^{G}=
\left\{
\begin{array}{llll}
\ov{xy}^{G'} & \mbox{if $u \notin \ov{xy}^{G'} $}\\
\ov{xy}^{G'}-\{u\}\cup\{u_i\}& \mbox{if $x,y\in V(G_i)$, $[xuy]^{G'}$ and $i \in \{1, 2\}$ }\\
\ov{xy}^{G'}-\{u\}\cup\{u_1,u_2\}& 
\left\{
\begin{array}{ll}
\mbox{if $x,y \in V(G_i)$ for an $i \in \{1,2\}$ and $[xyu]^{G'}$ or $[yxu]^{G'}$ or, }\\
\mbox{if $x \in V(G_i)$ and  $y \in V(G_{3-i})$ for an $i \in \{1,2\}$ }\\
\end{array}\right. \\
\end{array}
\right.
\]
And in the case that $y=u$ we have that:
$$\ov{ux}^{G'}-\{u\}=\ov{u_1x}^G- \{u_1,u_2\}.$$

This implies that $\ell(G) \ge \ell(G')$. 
Moreover it is clear that $br(G)=br(G')+1$. 
If $G' \in \mc C \setminus \mc F$, then by induction we have $\ell(G') + br(G') \ge |G'|=|G|-1$ and thus $\ell(G)+br(G) \ge |G|$ and we are done. 

So we may assume that $G' \in \mc F$. 
Since graphs in $\mc F$ are $2$-connected, it implies that $u_1u_2$ is a pendant edge of $G$. Then the result follows by   Lemma~\ref{lem:smallcaseextension} when $G' \neq C_4$, and it is easily checkable when $G' = C_4$. 
This ends the Part 1.

\subsubsection*{Part 2:  $G$ has no bridge and has a  cut-vertex. }

Let $u$ be a cut-vertex of $G$. 
Let $C_1$ be a connected component of $G-\{u\}$ and let $C_2$ be the union of the other connected components of $G$. 
Set $G_1=G[C_1 \cup \{u\}]$, and $G_2=G[C_2 \cup \{u\}]$. 
Observe that, since $G$ is bridgeless, $G_1$ and $G_2$ are also bridgeless. 

Claim (1) below, whose easy proof is omitted, implies that, for $i=1,2$,  a line induced by two vertices in  $V(G_i)$ is either disjoint from $V(C_{3-i})$ or contains $V(C_{3-i})$. In particular, it  implies that a line induced by two vertices  in $V(G_1)$ is distinct from a line induced by two vertices in $V(G_2)$, except in the perverse case where this line is universal. 

\begin{claim}\label{sub:cut-vertex} 
 For $i=1,2$ and for all $x,y \in V(G_i)$ we have:
 \begin{itemize}
\item   if $[xyu]$ or $[yxu]$,  then $\ov{xy}^G=\ov{xy}^{G_i} \cup V(C_{3-i})$, 
  \item otherwise $\ov{xy}^{G}=\ov{xy}^{G_i}$ and in particular $\ov{xy}^{G} \cap V(C_{3-i})=\emptyset$. 
  \end{itemize}
\end{claim}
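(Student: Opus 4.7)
The proof plan rests on two distance identities coming from the fact that $u$ is a cut-vertex of $G$ separating $V(C_1)\setminus\{u\}$ from $V(C_2)\setminus\{u\}$. First, for any $x,y\in V(G_i)$, one has $d_G(x,y)=d_{G_i}(x,y)$: a shortest $xy$-path that leaves $G_i$ would have to cross $u$ at least twice, contradicting its optimality, so it stays inside $G_i$. Second, for any $z\in V(C_{3-i})$ and any $w\in V(G_i)$, one has $d_G(z,w)=d_G(z,u)+d_G(u,w)$, since every $zw$-path must pass through $u$.

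Using the first identity, a vertex $z\in V(G_i)$ satisfies any of the betweenness relations $[zxy]$, $[xzy]$, $[xyz]$ in $G$ iff it satisfies the same relation in $G_i$. Hence $\ov{xy}^G\cap V(G_i)=\ov{xy}^{G_i}$, independently of any hypothesis on $u$. In particular this accounts for the $\ov{xy}^{G_i}$ part of the claimed equality in both subcases.

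Next I would take $z\in V(C_{3-i})$ (so $z\neq u$) and substitute the second identity into each of the three betweenness conditions. The condition $[zxy]^G$ becomes $d(z,u)+d(u,x)+d(x,y)=d(z,u)+d(u,y)$, which simplifies to $[uxy]$, i.e. $[yxu]$. Symmetrically $[xyz]^G\Leftrightarrow[xyu]$. Finally $[xzy]^G$ becomes $d(x,u)+2\,d(u,z)+d(u,y)=d(x,y)$, which is impossible because $d(x,y)\le d(x,u)+d(u,y)$ while $d(u,z)\ge 1$. Thus whether $z$ belongs to $\ov{xy}^G$ does not depend on $z$ at all: every $z\in V(C_{3-i})$ lies in $\ov{xy}^G$ if $[xyu]$ or $[yxu]$ holds, and none does otherwise. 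Combining with the previous paragraph gives exactly the two cases of the claim.

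There is no real obstacle; the whole argument is bookkeeping around the two distance identities, which is presumably why the authors chose to omit it. The only mild subtlety is noticing that $[xzy]^G$ cannot hold for $z\in V(C_{3-i})$, which follows immediately from $z\neq u$ and the triangle inequality applied at $u$.
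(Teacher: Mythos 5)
Your proof is correct, and since the paper omits the argument as ``easy'' it supplies exactly the intended verification: the two distance identities at the cut-vertex $u$, the observation that $\ov{xy}^G\cap V(G_i)=\ov{xy}^{G_i}$ because $G_i$ is isometric in $G$, and the reduction of $[zxy]$, $[xyz]$, $[xzy]$ for $z\in V(C_{3-i})$ to conditions on $u$ alone. Nothing is missing.
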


We next prove the following lower bound for $\ell(G)$.

\begin{claim}\label{lem:cut-vertex} 
$\ell(G)\geq \ell(G_1)+\ell(G_2)-1+|N_{G_1}(u)||N_{G_2}(u)|$.
\end{claim}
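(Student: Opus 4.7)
My plan is to partition the pairs $\{x,y\}$ of distinct vertices of $G$ into three classes (both in $V(G_1)$; both in $V(G_2)$; one in $V(C_1)$ and the other in $V(C_2)$) and count the distinct lines each class contributes to $\mathcal{L}(G)$, showing that different classes produce essentially disjoint families.

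For each $L\in\mathcal{L}(G_1)$, fix any pair $(x,y)\in V(G_1)^2$ defining $L$ and set $\tilde L:=\ov{xy}^G$. Claim~\ref{sub:cut-vertex}, together with the easy observation that $\ov{uy}^G=\ov{uy}^{G_1}\cup V(C_2)$ when one of the endpoints is $u$, implies $\tilde L\in\{L,\,L\cup V(C_2)\}$. Since $V(G_1)\cap V(C_2)=\emptyset$, we have $\tilde L\cap V(G_1)=L$ in either option, so distinct $L\in\mathcal{L}(G_1)$ produce distinct lines of $G$; the analogous statement holds for $G_2$. Now suppose $\tilde L=\tilde L'$ for some $L\in\mathcal{L}(G_1)$ and $L'\in\mathcal{L}(G_2)$: restricting to $V(G_1)$ forces $L=\tilde L'\cap V(G_1)$, and this set must be one of $\emptyset$, $\{u\}$, or $V(G_1)$; since $|L|\geq 2$ only the last is possible, and the symmetric statement forces $L'=V(G_2)$, which in turn forces both lifts to equal $V(G)$. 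Hence the two families of lifts share at most a single (universal) line, contributing at least $\ell(G_1)+\ell(G_2)-1$ distinct lines to $\mathcal{L}(G)$.

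For the cross pairs I would restrict to $a\in N_{G_1}(u)$ and $b\in N_{G_2}(u)$. Since $u$ is a cut-vertex every $V(C_1)$-to-$V(C_2)$ path passes through $u$, and $d(a,u)=d(b,u)=1$; a short distance computation based on these facts gives $\ov{ab}^G\cap N_{G_1}(u)=\{a\}$ and $\ov{ab}^G\cap N_{G_2}(u)=\{b\}$, so $a$ and $b$ are recoverable from $\ov{ab}^G$ and the $|N_{G_1}(u)|\cdot|N_{G_2}(u)|$ cross lines are pairwise distinct. Since $G$ is bridgeless we have $|N_{G_i}(u)|\geq 2$, so $\ov{ab}^G$ contains $b\in V(C_2)$ but misses some other neighbor of $u$ in $C_2$; in particular it is neither contained in $V(G_1)$ nor does it contain all of $V(C_2)$, so it is not a lifted $G_i$-line. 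Summing the three contributions proves the claim.

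The step I expect to be the main obstacle is making the ``lift'' $L\mapsto \tilde L$ precise: a single line of $G_1$ containing $u$ can be defined by several pairs in which $u$ plays different geometric roles (interior vertex, endpoint, or extension), and each choice may yield a different line in $G$. We do not actually need a canonical lift, only injectivity, and the identity $\tilde L\cap V(G_1)=L$ supplies it regardless of the choice of defining pair.
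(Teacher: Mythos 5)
Your proof is correct and follows essentially the same route as the paper's: lift the lines of $G_1$ and $G_2$ into $G$ via Claim~(1) (they overlap in at most the universal line), then add the $|N_{G_1}(u)|\,|N_{G_2}(u)|$ cross lines, which are pairwise distinct and distinct from the lifted ones because $\ov{ab}^G\cap N(u)=\{a,b\}$. Your extra care with injectivity of the lift (via $\tilde L\cap V(G_1)=L$) only makes explicit a point the paper leaves implicit.
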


\begin{proofclaim}
For $i=1,2$, let ${\cal L}_i=\{\ov{ab}^G: a,b \in V(G_i)\}$. 
By~(\ref{sub:cut-vertex}) $|{\cal L}_i|=\ell(G_i)$, and  the only possible line in ${\cal L}_1 \cap {\cal L}_2$ is the universal line. Hence $|\mathcal L_1 \cup \mathcal L_2| \ge \ell(G_1)+\ell(G_2)-1$.  
Moreover, for all lines $l$ in $\mc L_1 \cup \mc L_2$, 
$l$ contains either $V(C_1)$ or $V(C_2)$. 

For  $i=1,2$, let $u_i$ be a neighbor of $u$ in $G_i$. 
We have that $\ol{u_1u_2}^G\cap N(u)=\{u_1,u_2\}$ and since $u$ has  at least two neighbors in both  $G_1$ and $G_2$ (because $G$ is bridgeless), then $\ov{u_1u_2}^G\cap V(G_i)\notin \{\emptyset, V(G_i)\}$ for each $i=1,2$ and  thus it is distinct  from all lines in $\mathcal L_1 \cup \mathcal L_2$. 
Moreover, for every $u_i,v_i$ neighbors of $u$ in $G_i$, for each $i=1,2$, if $\{u_1,u_2\}\neq \{v_1,v_2\}$, then $\ol{u_1u_2}^G\neq \ol{v_1v_2}^G$.
Therefore, there are at least $ |N_{G_1}(u)||N_{G_2}(u)|$ lines in ${\cal L}(G)\setminus (\mathcal L_1 \cup \mathcal L_2)$.
\end{proofclaim}

Since $|N_{G_1}(u)||N_{G_2}(u)|\ge 4$ we get $\ell(G) \ge \ell(G_1) + \ell(G_2)+3$.
If  $\ell(G_1) + \ell(G_2)\geq |G_1|+|G_2|-4$, then 
$\ell(G) \ge |G_1|+|G_2|-1=|G|$. If $\ell(G_1)+\ell(G_2)\leq |G_1|+|G_2|-3$, 
by the induction hypothesis, we conclude that both $G_1$ and $G_2$ are in $\cal F$.
Moreover, from Lemma \ref{lem:smallcase} we get that $G_1=G_2=C_4$
or $\{G_1,G_2\}=\{C_4,K_6'\}$. We have verified that in the first 
case we have 11 lines, while in the second, we have 20 lines (see Figure 5). 
This ends the Part 2. 

\begin{figure}\label{fig:smallcutvertex}
\begin{center}
\begin{tabular}{|c|}
\hline \\
 \includegraphics[width=0.45\textheight]{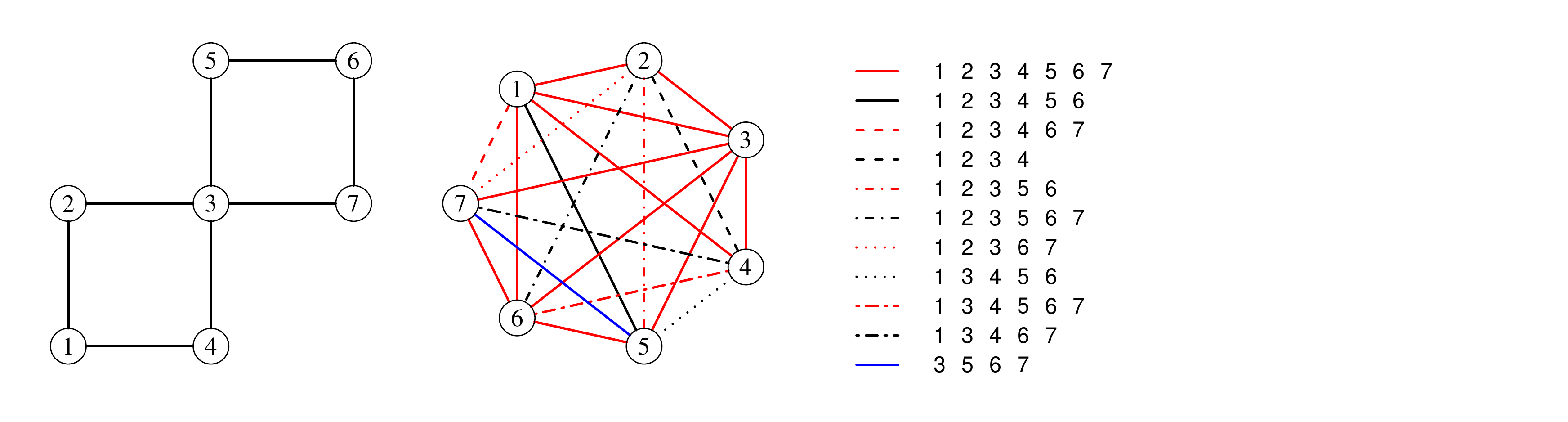}\\ \hline
 \multicolumn{1}{c}{}\\
\hline
 \includegraphics[width=0.45\textheight]{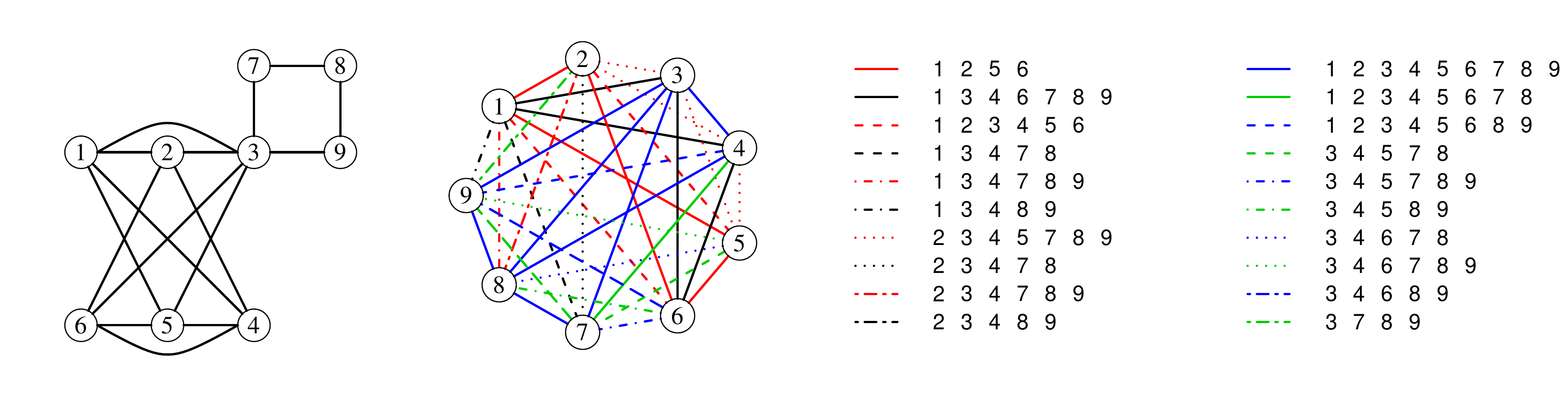}\\
 \hline
 \end{tabular}
 \end{center}
 \caption{Lines of the graphs obtained by gluing, respectively, a cycle of length four and a $K_6'$ to a $C_4$. To the right  we represent the set of lines as an edge coloring of the complete graph, together with the elements in each set.
 }
\end{figure}

\subsubsection*{Part 3: $G$ is 2-connected and chordal.}

In \cite{BBCCCCFZ} it was proved that Conjecture \ref{conjC} holds for 
chordal graphs. The proof of this part is the same as their proof. 
We first need Lemma 1 of  \cite{BBCCCCFZ}:

\begin{lemma}\label{lem:chordalchvatal}
Let $G$ be a chordal graph and let $s,x,y$ in $V(G)$ such that $[sxy]$. 
If $\ov{sx}=\ov{sy}$, then $x$ is a cut-vertex of $G$. 
\end{lemma}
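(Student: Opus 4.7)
The plan is to argue by contradiction: assume $x$ is not a cut-vertex of $G$, and I will exhibit a vertex $v$ lying in exactly one of $\ov{sx}$ and $\ov{sy}$. Write $i=d(s,x)$ and $j=d(x,y)$, fix a shortest $s$-$y$ path $P=s=p_0,p_1,\dots,p_{i+j}=y$ with $p_i=x$, and set $u=p_{i-1}$, $w=p_{i+1}$. Note that $u$ and $w$ are non-adjacent, else $P$ could be shortened by using the edge $uw$ to skip $x$. Since $x$ is not a cut-vertex, $G-x$ is connected, so I may fix a shortest $u$-$w$ path $P'=u=r_0,r_1,\dots,r_\ell=w$ inside $G-x$; necessarily $\ell\ge 2$.

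The key step is to apply chordality of $G$ repeatedly to the cycle $C$ formed by $P'$ together with the edges $ux$ and $xw$. Any chord of $C$ must be incident to $x$, since a chord lying entirely within $P'$ would shortcut $P'$ in $G-x$, contradicting that $P'$ is shortest. An induction on the sub-cycles produced after each such chord then forces $x\sim r_a$ for every $a\in\{1,\dots,\ell-1\}$; this iterated chordality argument is the step I expect to require the most care, but it is fairly standard. With this in hand, note that $d(r_0,y)=d(u,y)=j+1$ and $d(r_\ell,y)=d(w,y)=j-1$, and since consecutive $r_a$ differ in distance to $y$ by at most $1$, some internal index $a^*$ satisfies $d(v,y)=j$ for $v:=r_{a^*}$; moreover $v\sim x$. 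Since $v\sim x$ and $d(s,v)+d(v,y)\ge d(s,y)=i+j$, we obtain $d(s,v)\in\{i,i+1\}$.

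To conclude, I split on $d(s,v)$. If $d(s,v)=i$, then $[svy]$ places $v$ in $\ov{sy}$, while a neighbor of $x$ at the same distance from $s$ as $x$ satisfies none of the three betweenness relations defining $\ov{sx}$, so $v\notin\ov{sx}$. If $d(s,v)=i+1$, then $[sxv]$ places $v$ in $\ov{sx}$, whereas $d(s,v)+d(v,y)=i+j+1>d(s,y)$ together with easy size checks rules out all three betweenness options for $v\in\ov{sy}$. Either way $\ov{sx}\ne\ov{sy}$, contradicting the hypothesis.
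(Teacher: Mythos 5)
Your argument is correct. Note that the paper itself gives no proof of this lemma: it is imported verbatim as Lemma~1 of \cite{BBCCCCFZ}, so there is no in-paper argument to compare against. Your proof is a valid self-contained replacement. The key steps all check out: the cycle $C$ built from the edges $ux$, $xw$ and a shortest $u$--$w$ path $P'$ in $G-x$ has every chord incident to $x$ (a chord between two vertices of $P'$ would shorten $P'$ inside $G-x$), and the standard induction on sub-cycles then makes $x$ adjacent to every internal vertex of $P'$; the discrete intermediate-value argument produces an internal $v=r_{a^*}$ with $d(v,y)=j$, and since $v\sim x$ and $d(s,v)+d(v,y)\ge d(s,y)$ one gets $d(s,v)\in\{i,i+1\}$; finally the two distance cases each yield a vertex in exactly one of $\ov{sx}$, $\ov{sy}$ (in the first case $v\in\ov{sy}$ via $[svy]$ but all of $[vsx]$, $[svx]$, $[sxv]$ fail since $d(v,x)=1$ and $d(s,v)=d(s,x)=i\ge 1$; in the second case $v\in\ov{sx}$ via $[sxv]$ but all three relations for $\ov{sy}$ fail). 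One cosmetic remark: the conclusion that $a^*$ is internal deserves the explicit observation that $d(r_0,y)=j+1\neq j$ and $d(r_\ell,y)=j-1\neq j$, which you implicitly use; and the whole argument assumes $G$ connected so that $G-x$ is connected, which is harmless here since the lemma is only invoked for $2$-connected chordal graphs.
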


A vertex of a graph is called \emph{simplicial} if its neighbors are pairwise adjacent. 
By a classic result of Dirac~\cite{dirac}, a chordal graph has at least two simplicial vertices. 
Let $s$ be a simplicial vertex of $G$. 
Since $s$ is simplicial for any pair of vertices $x,y \in V(G)\setminus \{s\}$, $[xsy]$ does not hold. 
Hence, if $\ov{sx}=\ov{sy}$, we must have $[sxy]$ or $[syx]$ and thus, by Lemma~\ref{lem:chordalchvatal}, $x$ or $y$ is a cut vertex, a contradiction. 

Hence, the set $\{\ol{su}:u\in V(G) \setminus \{s\}\}$ has
$n-1$ distinct lines. Observe that all these lines contain $s$.  
Now, since $G$ is $2$-connected, $s$ has at least two neighbors $a,b$ and $s \notin \ov{ab}$.

\subsubsection*{Part 4:  $G$ is $2$-connected, non-chordal and has a non trivial module}

\emph{We first consider the case when $G$ has a non-trivial non-dominating module.}

Let $M=\{v_1, \dots v_s\}$ be a non-trivial non-dominating module  of $G$ with neighborhood $N(M)$ of minimal size.
Set  $G':=G-\{v_1\}$.   
If $G'\in {\cal F}\setminus \{ C_4 \}$, we are done by  Lemma \ref{lem:smallcaseextension}. 
If $G'=C_4$, then $G$ is either $K_{2,3}$ or $W_4'$, a contradiction with $G\notin {\cal F}$.
So we may assume that  $G'\notin \cal F$ and from the induction hypothesis we get
$\ell(G')+br(G')\geq |G'|=|G|-1$.

Set $\mc L'=\{\ov{xy}^G: x,y \in V(G')\}$. 
Since $G'$ is an isometric subgraph of $G$ (i.e. for all $x,y \in V(G')$, $d_{G'}(x,y)=d_G(x,y)$), we have,  for all $a,b \in   V(G')$, $\ov{ab}^G=\ov{ab}^{G'}$ or $\ov{ab}^{G'} \cup \{v_1\}$. 
Hence 
\begin{equation}\label{eq4:1}
|\mc L'| = \ell(G') \ge |G|-1-br(G').
\end{equation}
Moreover, each line in  $\mc L'$ that contains $v_1$ must contain at least one other vertex of $M$. In effect, let $\ov{ab} \in \mc L'$ such that $v_1 \in \ov{ab}$. 
If $a,b \notin M$, then $\ov{ab}$  contains $M$ and we are done, otherwise either $a$ or $b$ are  in $M$ and thus $\ov{ab}$ contains at least two vertices of $M$.

Let  $t\in G-(M\cup N(M))$. It is clear that $v_1$ is the unique vertex in $M$ which belongs to the line $\ol{v_1t}^G$. 
Hence, $\ol{v_1t}^G\notin \mc L'$ and thus, if $br(G')=0$, we are done by (\ref{eq4:1}).

So we may assume that $G'$ has at least one bridge. 
Let $ab$ be a bridge of $G'$, and let $G_a$, $G_b$ be the connected components of $G'-ab$ that contains respectively $a$ and $b$.
We are going to prove that one of $G_a$, $G_b$ is reduced to one vertex of degree exactly $2$ and that this vertex is in $N(M)$ (so it  also implies that $|M|=2$). 
Since a vertex in $G_a$ has at most one common neighbor with a vertex in $G_b$ and $|N(M)| \ge 2$, because $G$ is $2$-connected and thus two vertices in $M-\{v_1\}$ have  at least  two common neighbors in $G'$, $M-\{v_1\}$ cannot intersect both $G_a$ and $G_b$.   
So we may assume without loss of generality that $M-\{v_1\} \subseteq V(G_a)$. 
Since $ab$ is not a bridge of $G$, $v_1$ must have a neighbor in both $G_a$ and $G_b$. 
Hence the only neighbor of $v_1$ in $G_b$ is $b$, $v_2=a$ and $M=\{v_1,v_2\}$. 
Moreover, since $G$ has no cut-vertex, $G_b=\{b\}$. 
Finally, by minimality of $N(M)$, $v_2b$ is the unique bridge of $G'$. 
Indeed, if $G'$ has another bridge, then there exists a vertex $b' \neq b$ such that $v_2b'$ is a bridge of $G'$ and $N_G(b')=\{v_1,v_2\}$. Hence $\{b,b'\}$ is a non trivial non-dominating module of $G$ and $|N(\{b,b'\})|<|N(\{v_1,v_2\})|$, a contradiction.

Consider now the line $\ov{v_1v_2}$. 
We claim that $\ov{v_1v_2} \notin \mc L' \cup \{\ov{v_1t}\}$ which gives the result by (\ref{eq4:1}). 
If $v_1v_2$ is an edge of $G$, then $\ov{v_1v_2}=\{v_1,v_2\}$ and the result holds. 
Hence we may assume that $v_1v_2$ is not an edge and thus $\ov{v_1v_2}=M \cup N(M)$. 
So $\ov{v_1v_2} \neq \ov{v_1t}$ and we may assume for contradiction that $\ov{v_1v_2} \in \mc L'$ which implies there exists $x,y \in N(M) \cup M - \{v_1\}$ such that $\ov{xy}=M \cup N(M)$. 
If  $\{x,y\} \cap \{b,v_2\} \neq \emptyset$, then $\ov{xy}$ must contain some vertices of $V(G)-(M \cup N(M))$, so we may assume that $\{x,y\} \subseteq N(M)$, but then $b \notin \ov{xy}$.

\medskip

\noindent\emph{We now consider the case where all the  non-trivial modules of $G$ are dominating.}
\\
In this case, $G$ has diameter $2$. 
It was  proven in \cite{Chvatal2} that for every graph $G$ of diameter $2$, $G$ either has an universal line or it has at least $|V(G)|$ distinct lines. Since what we want to prove is stronger, we cannot use this result. 
We will need the following lemma that was already proved in \cite{ChCh11}.
\begin{lemma}\label{lem:diamtwo}
Let $G$ be a graph  of diameter two  and let  $x,a,b$ be three vertices of $G$ such that $\ov{xa}=\ov{xb}$. 
 Then either $(a,b)$ is a pair of  false twins and $d(x,a)=d(x,b)=1$, or $d(x,a)\neq d(x,b)$.
\end{lemma}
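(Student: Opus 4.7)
The plan is to assume that $\ov{xa}=\ov{xb}$ and $d(x,a)=d(x,b)$, and then show this forces $d(x,a)=d(x,b)=1$ together with $(a,b)$ being a pair of false twins. Since the diameter is $2$, every distance in $G$ is either $1$ or $2$, which keeps the case analysis small.

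First I would rule out $d(x,a)=d(x,b)=2$. Since $a\in\ov{xb}$ and $a\notin\{x,b\}$, one of the betweenness relations $[xab]$, $[xba]$, $[axb]$ must hold. The first two each force $d(a,b)=0$, and $[axb]$ would give $d(a,b)=4$, which is impossible in a diameter-two graph. Hence $d(x,a)=d(x,b)=2$ is incompatible with $\ov{xa}=\ov{xb}$.

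Next, in the remaining case $d(x,a)=d(x,b)=1$, the same analysis applied to $a\in\ov{xb}$ eliminates $[xab]$ and $[xba]$ and leaves $[axb]$, which yields $d(a,b)=2$; thus $a$ and $b$ are non-adjacent. It then remains to show $N(a)=N(b)$, which is where the equality of lines is really used. I would characterize membership in $\ov{xa}\setminus\{x,a\}$: for a vertex $u$, since $d(x,a)=1$, the relation $[xua]$ is impossible, so $u\in\ov{xa}$ iff either ($d(u,x)=1$ and $d(u,a)=2$) or ($d(u,a)=1$ and $d(u,x)=2$). The analogous characterization holds for $\ov{xb}$. Splitting on whether $d(u,x)=1$ or $d(u,x)=2$ then shows that in each case $u\sim a$ iff $u\sim b$, yielding $N(a)=N(b)$ and hence false twins.

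The argument is essentially a diameter-two case analysis, so there is no serious technical obstacle; the only care needed is to treat the two sub-cases $d(u,x)=1$ and $d(u,x)=2$ separately and check that in each sub-case the characterizations of $\ov{xa}$ and $\ov{xb}$ collapse to adjacency conditions that must agree. The main (mild) subtlety is ensuring that the $d(x,a)=d(x,b)=2$ case is genuinely impossible rather than giving an additional conclusion — this is what prevents a symmetric alternative in the lemma's conclusion.
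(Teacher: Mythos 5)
Your proposal is correct and follows essentially the same route as the paper: rule out $d(x,a)=d(x,b)=2$ and the adjacent case via the betweenness arithmetic, then compare neighborhoods using a case split on $d(u,x)\in\{1,2\}$ (the paper phrases this last step contrapositively, via a vertex $c$ adjacent to $a$ but not $b$, but the computation is identical). No gaps.
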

\begin{proof}
Assume that $d(x,a)= d(x,b)$. If $d(x,a)=2$, then $a \notin \ov{xb}$, a contradiction, so $d(x,a)=1$. If $a$ and $b$ are adjacent, then again $a \notin \ov{xb}$, so $a$ and $b$ are not adjacent. Assume now that there exists a vertex $c$ adjacent to $a$ but not to $b$, i.e. $d(c,a)=1$ and $d(c,b)=2$. If $d(c,a)=1$, then $c \in \ov{xb}$ and $c \notin \ov{xa}$, and if  $d(c,x)=2$, then $c \notin \ov{xb}$ and $c \in \ov{xa}$, a contradiction in both cases. So $(a,b)$ is a pair of  false twin. 
\end{proof}

Notice that for any non-trivial dominating module $M$, the set $N(M)$ is a module as well
and $M\cup N(M)=V(G)$. 
Moreover, for each $u\in M$ and each $v\in N(M)$ the line $\ov{uv}$ is given by 
$$\ol{uv}=   (M-N(u)) \cup (N(M)-N(v)).$$

We assume first that $G$ does not  contain pairs of false twins.  
Let $M$ be a module of $G$. 
For $u, u'\in M$ and $v, v'\in N(M)$ with $\{u,v\}\neq \{u',v'\}$ we have that 
$\ol{uv}\neq \ol{u'v'}$. Hence,  $\ell(G)\geq |M||N(M)|$.
Since $|M|\geq 2$, then the equality $|M||N(M)|=|M|+|N(M)|+(|M|-1)(|N(M)-1)-1$
implies that $\ell(G)\geq |G|$ when $N(M)$ is not a singleton.
If $N(M)=\{x\}$, then all the lines $\ol{xv}$ are distinct, when 
$v$ varies over $M$. This gives us $|V(G)|-1$ distinct lines, all containing $x$. 
Since $M$ has no pair of false twins, it contains at least one edge $ab$, and $\ov{ab}$ is a new line since $x \notin \ov{ab}$.

Hence, we can assume that every non-trivial module $M$ is a dominating set and the graph $G$ contains pairs of false twins.

Let $(u_1,v_1), (u_2,v_2), \dots, (u_t,v_t)$ be the pairs of false twins of $G$ and set $T=\{u_1,v_1, \dots, u_t,v_t\}$. 
Since $\{u_i,v_i\}$ is a non-trivial module for $i=1, \dots, t$, it must be  a dominating module and thus $N(u_i)=N(v_i)=V(G)\setminus \{u_i,v_i\}$. This implies that all vertices in $\{u_1,v_1, \dots, u_t,v_t\}$ are pairwise distinct, i.e. $|T|=2t$, and that $T$ induces a complete graph minus a perfect matching. 

Set $U= \{u_1, \dots, u_t\}$ and $\mathcal L_U=\{\ov{u_iu_j}: 1 \le i \neq j \le t\} \cup \{ \ov{u_1v_1} \}$. 
For $1 \le i\neq j \le t$, we have $\ov{u_iu_j}=\{u_i,v_i,u_j,v_j\}$  and $\ov{u_1v_1}=V(G)$. 
So $|\mathcal L_U|={t \choose 2}+1$.

Set $R=V(G)\setminus T$. We  split the rest of the proof in three cases. 
\medskip 

\noindent{\bf Case 4.1:} $R$ is empty.
\\
In this case $|V(G)|=2t$. 
If $t\in \{2,3,4\}$, then $G \in \{C_4, K'_6,K'_8\}$, which is a contradiction. 
If $t \ge 5$, then $\ell(G)\geq |\mathcal L_U| \geq {t \choose 2}\ge 2t$ and we are done.
\medskip 

\noindent{\bf Case 4.2:} $R$ is a clique.
\\
Set $|R|=k\ge 1$. 
Set $\mathcal L_R=\{\ov{xy}:x,y \in R\}$.
Notice that each pair of vertices $x,y \in R$ are true twins, resulting in $\ov{xy}=\{x,y\}$ (they are uniquely determined).
Now, for each $x \in R$, set $\mathcal L_{xU}=\{\ov{xu_i}: i=1, \dots, t\}$. Observe that $\ov{xu_i}=\{x,u_i,v_i\}$. It follows that 
lines in $\cup_{x \in R} \mathcal L_{xU}$ are all pairwise distinct and disjoint of $\mathcal L_U$. 
Moreover, lines in $\cup_{x \in R} \mathcal L_{xU}$ are not universal (except when $|R|=1$ and $T=\{u_1,v_1\}$, but then the graph is not 2-connected). 
Hence, all lines in $\mathcal L_U$, $\mathcal L_R$ and $\cup_{x \in R} \mathcal L_{xU}$ are pairwise distinct. 
So if $|R|$ and $t$ are greater than 2 we have that:
\begin{equation}\label{2.2}
\ell(G) \geq {t \choose 2} + {|R| \choose 2} +t|R| +1 \ge 2t + |R|=|V(G)|
\end{equation}

If $|R|=1$ and $t \geq 2$, $\ell(G) \geq {t \choose 2} + t +1$. If $t \geq 3$ this quantity is greater than $|V(G)|$. If $t=2$ then $G = W_4$ which is a contradiction because $W_4 \in \mathcal F$.
If $|R|=1$ and  $t=1$, then $G$ is not $2$-connected. Hence, $R$ is not a clique. 

\medskip 

\noindent{\bf Case 4.3:} $R$ is non empty and is not  a clique.
\\
There exists $x,y \in R$ such that $xy$ is not an edge. 
Since $(x,y)$ is  not a pair of false twin,  we may assume that there exists $z \in R \setminus \{x,y\}$ such that $z$ is adjacent to $y$ but not to $x$. 
Set $\mathcal L_{x}=\{\ov{xa}: a \in U \text{ or } a \in R \setminus \{x\}\}$. 

Suppose $\mathcal L_{x}$ contains an universal line $\ov{xa}$. If $a \in R$, then $d(x,a) = 2$ and all the other vertices in $R$ are at distance 1, but this would imply that $(x,a)$ is a pair of false twins. Hence, $a = u_i$ for some $i \in \{1,2,\ldots, t\}$. Notice that $\ov{xu_i} = \{v_i\} \cup (R \setminus N(x))$. If it is universal, then $i=1, t=1$ and $N(x) = T$. But then, $R \setminus \{x\}$ is a non-trivial non-dominanting module which is a contradiction. Hence, we can assume that 
$\mathcal L_{x}$ does not contain an universal line. 

Recall that all lines in $\mathcal L_U$ are included in $T$ except for the universal line. 
Since no line in $\mathcal L_x$ is universal, then $\mathcal L_x \cap \mathcal L_U=\emptyset$.  
Moreover, since $x \notin \ov{yz}$, $\ov{yz} \notin \mathcal L_x$ and since $\ov{yz} \cap T=\emptyset$, $\ov{yz} \notin \mathcal L_U$. 
Hence, we have that $\ell(G) \ge {t \choose 2} + 2 + |\mathcal L_x|$ if $t \geq 2$ or $\ell(G) \ge   2 + |\mathcal L_x|$ if $t=1$.

In both cases if $|\mathcal L_x| \ge |R|+t-1$ then $\ell(G) \ge 2t+|R| = |G|$. Thus it is enough to prove that  for all $a,b \in U \cup R \setminus \{x\}$, we have $\ov{xa}\neq \ov{xb}$. 

Let $a,b \in  U \cup R \setminus \{x\}$ and let us prove that $\ov{xa} \neq \ov{xb}$. 
Since $a,b \in  U \cup R \setminus \{x\}$, $(a,b)$ is not a pair of false twins, and thus, by Lemma~\ref{lem:diamtwo}, we may assume that  $d(x,a) \neq d(x,b)$. Without loss of generality, $d(x,a)=1$ and $d(x,b)=2$ which implies in particular that $b \in R \setminus \{x\}f$. 
If $a \in R \setminus \{x\}$, then $T \subseteq \ov{xb}$ and $T \cap \ov{xa}=\emptyset$. 
So we may assume that $a \in U$. 
One of the vertices $y,z$ is distinct from $b$, say $y \neq b$. 
We have $[xay]$, so $y \in \ov{xa}$. 
But $d(x,y)=d(x,b)=2$  which implies that $y \notin \ov{xb}$. 

%
%
%
Thus, $\ell(G) \geq |R| +2 = |G|$ which proves the Theorem.



\begin{thebibliography}{10}


\bibitem{hypergraphA}
P.~Aboulker, A.~Bondy, X.~Chen, E.~Chiniforooshan, V.~Chv\'{a}tal, and P.~Miao,
Number of lines in hypergraphs,
\emph{Discrete Applied Mathematics} \textbf{171} (2014), 137--140.



\bibitem{metricSpace}
P.~Aboulker, X.~Chen, G.~Huzhang, R.~Kapadia and C.Supko,
Lines, betweenness and metric spaces. 
Submitted. 
\texttt{arXiv:1412.8283 [math.CO]}



\bibitem{AK}
{P. Aboulker and R. Kapadia}, 
{The Chen-Chv{\'{a}}tal conjecture for metric spaces induced by distance-hereditary graphs},
\emph{Eur. J. Comb.}, \textbf{43} (2015), {1--7}.



\bibitem{BBC} 
L.~Beaudou, A.~Bondy, X.~Chen, E.~Chiniforooshan, M.~Chudnovsky, 
V.~Chv\'{a}tal, N.~Fraiman, and Y.~Zwols, Lines in hypergraphs,
\emph{Combinatorica} \textbf{33} (6) (2013), 633--654.

\bibitem{BBCCCCFZ} 
{L. Beaudou, A. Bondy, X. Chen, E. Chiniforooshan, M.  Chudnovsky,
V.  Chv{\'{a}}tal, N. Fraiman and Y.  Zwols},
{A De Bruijn-Erd{\H{o}}s Theorem for Chordal Graphs},
\emph{Electr. J. Comb.}, \textbf{22}, {1}, {P1.70},  {2015}.

\bibitem{CC}
X. Chen and V. Chv\'{a}tal,
Problems related to a de Bruijn - Erd\H os theorem,
\emph{Discrete Applied Mathematics} \textbf{156} (2008),  2101--2108.

\bibitem{ChenGraph}
X.~Chen, G.~Huzhang, P.~Miao, and K.~Yang,
Graph metric with no proper inclusion between lines,
{\tt arXiv:1402.5627 [math.MG]}


\bibitem{ChCh11}
E. Chiniforooshan and  V. Chv{\'{a}}tal, 
 A de Bruijn - Erd{\H{o}}s theorem and metric spaces,
 \emph{Discrete Mathematics {\&} Theoretical Computer Science},
 \textbf{13} (1) (2011), {67--74}.


\bibitem{ChvatalMetric}
V. Chv\'atal,
Sylvester-Gallai theorem and metric betweenness. 
\emph{Discrete \& Computational Geometry} \textbf{31} (2) (2004), 175--195.

\bibitem{Chvatal2}
V. Chv\'atal,
A de Bruijn-Erd\H{o}s theorem for 1-2 metric spaces,
 \emph{Czechoslovak Mathematical Journal} \textbf{64} (1) (2014), 45--51.




\bibitem{Cox} H.S.M.~Coxeter, \textit{Introduction to geometry}, Wiley, New-York, 1961.


\bibitem{dbe}
N.~G.~De~Bruijn, P.~Erd\H{o}s,
On a combinatorial problem,
{\em Indagationes Mathematicae\/} {\bf 10} (1948),
421--423.

\bibitem{dirac}
G.A.~Dirac,
On rigid circuit graphs, 
{\em Abh. Math. Sem. Univ. Hambourg \/} {\bf 25} (1961),
71--76.


\bibitem{E43} P. Erd\H os, Three point collinearity, \textit{Amer. Math. Monthly} \textbf{50} (1943), Problem 4065, p.~65. Solutions in
Vol. \textbf{51} (1944), 169--171.

\bibitem{L1}
I. Kantor, B. Patk\'os,
Towards a de Bruijn-Erd\H{o}s theorem in the L1-metric,
{\em Discrete \& Computational Geometry\/} {\bf 49} (2013),
659--670.
 \bibitem{menger} K. Menger, Untersuchungen \"uber allgemeine metrik, \textit{Math. Ann.} \textbf{100} (1928), 75--163.
 
 


\end{thebibliography}
\end{document}